\documentclass[a4paper, 12pt]{article}
\usepackage{amsmath}
\usepackage{amssymb}
\usepackage{amsthm}
\usepackage[top=20truemm, left=17truemm, right=17truemm]{geometry}
\usepackage{amscd}
\newtheorem{dfn}{Definition}[section]
\newtheorem{thm}[dfn]{Theorem}
\newtheorem{lem}[dfn]{Lemma}
\newtheorem{rem}[dfn]{Remark}
\newtheorem{cor}[dfn]{Corollary}

\newtheorem{prop}[dfn]{Proposition}
\newtheorem{ex}[dfn]{Example}
 
\usepackage{amscd}
\usepackage{graphicx}
\usepackage[all]{xy}
\usepackage{xcolor}

\title{Homotopy of inner automorphism groups of Cuntz algebras}

\date{}
\author{Taro Sogabe}
\begin{document}
\maketitle
\abstract{We show that the inner automorphism groups  and automorphism groups of the Cuntz algebras have the same homotopy groups.
In particular, the homotopy types of the inner automorphism groups and the projective unitary groups of Cuntz algebras are different.}
\section{Introduction}
The automorphism group $\operatorname{Aut}(A)$ of a C*-algebra $A$ is large in general, and several previous researches focus on its homotopy theory.
For a class of C*-algebras, called unital Kirchberg algebras, one can compute the homotopy group by using M. Dadarlat's formula:
\[\pi_i(\operatorname{Aut}(A))\cong KK^i(C_\nu A, SA),\quad (i\geq 1)\]
(see \cite{Dh}).
The left hand side is the i-th homotopy group of the topological group $\operatorname{Aut}(A)$ with the point wise norm topology,
and the right hand side is a certain computable abelian group, called KK-group, that helps us to study C*-algebras from algebraic point of view (see \cite{B} for the basics on K-theory and KK-theory).

Applying M. Dadarlat's formula to the Cuntz algebra $\mathcal{O}_n$ (see Sec. \ref{Pre} for the definition of this algebra),
one has
\begin{equation}
    \pi_{2k}(\operatorname{Aut}(\mathcal{O}_n))=0,\quad \pi_{2k+1}(\operatorname{Aut}(\mathcal{O}_n))=\mathbb{Z}/(n-1)\mathbb{Z}, \quad (k\geq 0)\label{dada}\end{equation}
which are different from the homotopy groups of projective unitary group $PU(\mathcal{O}_n):=U(\mathcal{O}_n)/\mathbb{T}$:
\[\pi_2(PU(\mathcal{O}_n))=\mathbb{Z},\quad \pi_1(PU(\mathcal{O}_n))=0\]
(see Corollary \ref{hpu}).

The inner automorphism group $\operatorname{Inn}(\mathcal{O}_n)$ and projective unitary group $PU(\mathcal{O}_n)$ are the same set,
but their topologies are different.
Furthermore, their homotopy types are different.
\begin{thm}\label{M}
    Inclusion $\operatorname{Inn}(\mathcal{O}_n)\subset \operatorname{Aut}(\mathcal{O}_n)$ gives the following isomorphisms:
    \[\pi_i(\operatorname{Inn}(\mathcal{O}_n))\cong \pi_i(\operatorname{Aut}( \mathcal{O}_n)),\quad i\geq 0.\]
\end{thm}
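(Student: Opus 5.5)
The plan is to prove $\pi_i(\operatorname{Aut}(\mathcal{O}_n),\operatorname{Inn}(\mathcal{O}_n))=0$ for all $i\ge 1$, together with path-connectedness of $\operatorname{Inn}(\mathcal{O}_n)$. The long exact sequence of the pair then gives the claimed isomorphisms. Concretely, for every compact pair $(D^i,S^{i-1})$ and every continuous map $f\colon (D^i,S^{i-1})\to(\operatorname{Aut}(\mathcal{O}_n),\operatorname{Inn}(\mathcal{O}_n))$, I will deform $f$ into $\operatorname{Inn}(\mathcal{O}_n)$ by a homotopy that keeps $S^{i-1}$ inside $\operatorname{Inn}(\mathcal{O}_n)$ throughout. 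Here $\operatorname{Inn}(\mathcal{O}_n)$ carries the subspace (point-norm) topology.

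First I will reformulate everything in terms of unitaries, using Cuntz's correspondence $u\mapsto\lambda_u$, $\lambda_u(S_j)=uS_j$. This is a homeomorphism from $U(\mathcal{O}_n)$ with the norm topology onto the unital endomorphisms with the point-norm topology. Its inverse is $\lambda\mapsto\sum_j\lambda(S_j)S_j^*$, which is continuous in point-norm. Under it $\operatorname{Ad}w$ corresponds to $w\varphi(w)^*$, where $\varphi(x)=\sum_j S_jxS_j^*$. Thus $\operatorname{Inn}(\mathcal{O}_n)$ is the norm-topologized image of $\Phi\colon U(\mathcal{O}_n)\to U(\mathcal{O}_n)$, $\Phi(w)=w\varphi(w)^*$. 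The fibres of $\Phi$ are the $\mathbb{T}$-orbits, but $\Phi$ is not a quotient map onto its image; this is exactly why $PU(\mathcal{O}_n)$ and $\operatorname{Inn}(\mathcal{O}_n)$ differ. Path-connectedness of $\operatorname{Inn}(\mathcal{O}_n)$ is immediate because $U(\mathcal{O}_n)$ is connected. A map $X\to\operatorname{Aut}(\mathcal{O}_n)$ from a compact space is the same as a unitary $u\in C(X)\otimes\mathcal{O}_n$ such that each fibre $\lambda_{u(x)}$ is an automorphism. A map into $\operatorname{Inn}$ is such a $u$ with each $u(x)\in\Phi(U(\mathcal{O}_n))$, with no continuity demanded of a choice of $w(x)$.

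The key step is a parametrized approximate innerness statement. Let $\alpha$ be a $C(X)$-linear automorphism of $C(X)\otimes\mathcal{O}_n$ coming from $f$. I want to show it is approximately unitarily equivalent to the identity by unitaries $w_m\in C(X)\otimes\mathcal{O}_n$ at the level of fibres; more precisely, $\|\alpha_x(S_j)-\operatorname{Ad}w_m(x)(S_j)\|\to0$ uniformly in $x$. For this I would use Dadarlat's continuous-field classification, namely the $C(X)$-linear version of the Kirchberg--Phillips uniqueness theorem for $C(X)\otimes\mathcal{O}_n$. The two maps $\alpha$ and $\mathrm{id}$ have the same $KL$-class over $C(X)$ locally, since $K_*(\mathcal{O}_n)$ is finite with unit class fixed. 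The fibrewise fact that all automorphisms of $\mathcal{O}_n$ are approximately inner is the case $X=\mathrm{pt}$. Since $\operatorname{Ad}w_m(x)$ is close to $\alpha_x$ uniformly, the corresponding unitaries $\Phi(w_m(x))$ and $u(x)$ are uniformly close. For small distance the straight-line path, normalized by polar decomposition, gives a homotopy through endomorphisms. To make it a homotopy through automorphisms, I would instead use the fact that $\operatorname{Aut}(\mathcal{O}_n)$ is open in the unital endomorphisms (a small perturbation of an automorphism is conjugate to it by a unitary close to $1$). This yields a homotopy from $f$ to $x\mapsto\operatorname{Ad}w_m(x)$, which lands in $\operatorname{Inn}$.

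The remaining task is the relative condition on $S^{i-1}$: the deformation must stay in $\operatorname{Inn}$ there. On $S^{i-1}$ both $f$ and $\operatorname{Ad}w_m$ are inner. Near any point the small-distance homotopy can be chosen as $\operatorname{Ad}(v_t(x)w_m(x))$ with $v_t$ a short path of unitaries, provided the conjugating unitary from the openness argument is produced inner-compatibly. The main obstacle is precisely this: building the perturbation unitaries continuously in $x$ while avoiding any need for a continuous lift $w(x)$ of $f|_{S^{i-1}}$. Such a lift need not exist, and its failure to exist is what makes $\pi_*(PU)$ differ. My first attempt would be to work entirely with the unitaries $u(x)$. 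On $S^{i-1}$, $u(x)$ and $\Phi(w_m(x))$ are both in the image of $\Phi$ and are norm-close. I would then find a unitary $v$ near $1$ with $\Phi(v\,w)=u$, chosen locally continuously by an implicit-function-type argument for $\Phi$ near $\Phi(w)$. Any residual $\mathbb{T}$-ambiguity is harmless because $\Phi$ kills $\mathbb{T}$.
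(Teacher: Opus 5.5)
\textbf{There is a genuine gap.} Your overall strategy, proving $\pi_i(\operatorname{Aut}(\mathcal{O}_n),\operatorname{Inn}(\mathcal{O}_n))=0$, is sound. However, both steps you use to pass from $f$ to $x\mapsto\operatorname{Ad}w_m(x)$ fail.

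\emph{Openness.} $\operatorname{Aut}(\mathcal{O}_n)$ is \emph{not} open among unital endomorphisms. The canonical endomorphism $\varphi(x)=\sum_j S_jxS_j^*$ is not surjective, since its image commutes with $S_1S_1^*\notin\mathbb{C}1$. Yet all unital endomorphisms of $\mathcal{O}_n$ have the same $KK$-class, so Kirchberg--Phillips gives unitaries $w_m$ with $\operatorname{Ad}w_m\to\varphi$. Hence $\operatorname{Ad}w_m^*\circ\varphi\to{\rm id}$ in point-norm, through non-automorphisms. So your polar-decomposed straight-line path between $u(x)$ and $\Phi(w_m(x))$ may leave $\operatorname{Aut}(\mathcal{O}_n)$, and no conjugating unitary near $1$ need exist.

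\emph{Boundary lifting.} The implicit-function step on $S^{i-1}$ is impossible, for exactly the reason you identified. Take the central sequence $U_m$ of Remark~\ref{not}. Then $\|\Phi(U_m)-1\|\le\sum_j\|U_mS_jU_m^*-S_j\|\to 0$, but $\Phi^{-1}(\Phi(U_m))=\mathbb{T}U_m$ stays at distance $2$ from $1$. Producing $v$ near $1$ with $\Phi(vw)=u$ for every $u$ in the image near $\Phi(w)$ would make the inverse of the bijection $PU(\mathcal{O}_n)\to\operatorname{Inn}(\mathcal{O}_n)$ continuous, which is false. In short, a sequence $w_m$ (approximate unitary equivalence) leaves a small gap that cannot be bridged by perturbation inside $\operatorname{Aut}$ or $\operatorname{Inn}$.

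\textbf{The missing idea} is to use \emph{asymptotic} unitary equivalence with a norm-continuous path, as the paper does in Lemma~\ref{fh}.
\begin{itemize}
\item Since $D^i$ is contractible and $f(s_0)={\rm id}$, you have $KK(\alpha)=KK(\iota)$ in $KK(\mathcal{O}_n,C(D^i)\otimes\mathcal{O}_n)$.
\item Dadarlat's uniqueness result (used there after identifying $\mathcal{O}_n\cong\mathcal{O}_n\otimes\mathcal{O}_\infty$) then gives a path $[0,1)\ni t\mapsto w_t\in U(C(D^i)\otimes\mathcal{O}_n)$ with $\operatorname{Ad}w_t(x)(d)\to f(x)(d)$ uniformly in $x$.
\item Set $H(t,x)=\operatorname{Ad}w_t(x)$ for $t<1$ and $H(1,x)=f(x)$. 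This $H$ is continuous and lies in $\operatorname{Inn}$ for $t<1$. On $S^{i-1}$ at $t=1$ it equals $f$, which is already inner.
\item Replacing $H(t,x)$ by $H(t,s_0)^{-1}H(t,x)$ fixes the base point.
\end{itemize}
This gives a homotopy of pairs from a map into $\operatorname{Inn}$ to $f$, so $[f]=0$, with no openness, perturbation or lifting needed. For $i=0$ you still need $\pi_0(\operatorname{Aut}(\mathcal{O}_n))=0$ separately.

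\textbf{Comparison with the paper.} So repaired, your route genuinely differs from the paper's. The paper applies uniqueness on $S^i$ rather than on a disk. It must then kill the leftover class $[\operatorname{Ad}u_0]$ using the $\mathcal{O}_\infty$ central path $J_t$ and the surjectivity of $K_1(C(S^i)\otimes\mathcal{O}_\infty)\to K_1(C(S^i)\otimes\mathcal{O}_n\otimes\mathcal{O}_\infty)$. It proves surjectivity separately, via Dadarlat's $KK$-computation and Izumi--Matui's description of $\pi_{2k}(\Omega\operatorname{Aut}(\mathcal{O}_n\otimes\mathbb{K}))$. Working on the disk makes $\operatorname{Ad}w_0$ automatically null-homotopic in $\operatorname{Inn}$, which is why that extra K-theoretic input is not needed.
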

Izumi--Matui's computation \cite{IM} helps us to show a key step of the proof.
\begin{rem}
    Dadarlat--Pennig's argument \cite[Lem. 2.2]{DP} shows $\pi_i(\operatorname{Inn}(\mathcal{O}_\infty))=0=\pi_i(\operatorname{Aut}(\mathcal{O}_\infty))$ (see also Remark \ref{ic}).
\end{rem}

\begin{rem}
We should note that N. Ozawa's surprising result \cite{O} (see also \cite{Dj}) tells us  that the situation is completely different in the case of von Neumann algebra where the topology of (inner) automorphism group is different from and more difficult than the point wise norm topology.
As a corollary of \cite{O},  any von Neumann algebra having separable predual and  tensorial absorption of the hyper finite $II_1$ factor has a contractible inner automorphism group.
The C*-algebraic counter part of this theorem could be contractibility of $\operatorname{Inn}(\mathcal{O}_n\otimes M_{n^\infty})=\operatorname{Inn}(\mathcal{O}_n)$ that never happens for at least point wise norm topology.
\end{rem}
\begin{rem}
    Our result implies that the natural continuous map $U(\mathcal{O}_n)\to\operatorname{Inn}(\mathcal{O}_n)$ is not a fibration due to the topological difference between $PU(\mathcal{O}_n)$ and $\operatorname{Inn}(\mathcal{O}_n)$ resulting from nontrivial central sequences (see Remark \ref{not}).
    Furthermore, the map $\mathbb{Z}/(n-1)\mathbb{Z}=\pi_1(U(\mathcal{O}_n))\to \pi_1(\operatorname{Inn}(\mathcal{O}_n))=\mathbb{Z}/(n-1)\mathbb{Z}$ is zero because it factor through $\pi_1(PU(\mathcal{O}_n))=0$.
\end{rem}

\section*{Aknowledgement}
The author would like to thank Narutaka Ozawa for kindly explaining \cite{O} and telling the author that Dadarlat--Pennig's argument showing the contracibility of $\operatorname{Aut}(\mathcal{O}_2)$ can be directly applicable to $\operatorname{Inn}(\mathcal{O}_2)$.
His comment leads to the question of comparing $PU(\mathcal{O}_2)\sim_hK(\mathbb{Z}, 2)$ and $\operatorname{Inn}(\mathcal{O}_2)\sim_h \text{Point}$.
The author also would like to thank  Ulrich Pennig for the stimulating discussions.

\section{Notations}
In this paper,
every C*-algebra $A$ is assumed to be separable and unital except for some algebras including the algebra $\mathbb{K}$ of compact operators on the infinite dimensional separable Hilbert space and the multiplier algebra $\mathcal{M}(A\otimes \mathbb{K})$ of $A\otimes\mathbb{K}$ (see \cite[Chap. VI]{B}).

For a unital C*-algebra $A$,
we denote by $1_A\in A$ the unit of $A$ and write
\[U(A):=\{u\in A\mid uu^*=u^*u=1_A\}.\]
The unitary group $U(A)$ is a topological group with the norm topology.
An automorphism of $A$ is inner if it is given by
\[\operatorname{Ad}u : A\ni a\mapsto uau^*\in A,\quad u\in U(A).\]
We denote by $\operatorname{Aut}(A)$ (resp. $\operatorname{Inn}(A)$) the group of automorphisms (resp. inner automorphisms) with the point wise topology whose open subbasis are given by the following sets:
\[\{\alpha \in \operatorname{Aut}(A)\mid ||\alpha(a)-\beta(a)||<\epsilon\},\quad \beta\in \operatorname{Aut}(A),\; a\in A,\; \epsilon>0.\]
This topology is metrizable via the metric
\[d(\alpha, \beta):=\sum_{k=1}^\infty\frac{||\alpha(a_n)-\beta(a_n)||}{2^n},\quad \{a_n\}_{n=1}^\infty\subset \{a\in A\mid ||a||\leq 1\}\;\text{dense subset}.\]
The topology of $\operatorname{Inn}(A)$ is the restriction of that of $\operatorname{Aut}(A)$.
We write $\mathbb{T}:=\{z\in\mathbb{C}\mid |z|=1\}$ and often identify it with $\mathbb{T}1_A\subset U(A)$.
If $A$ is simple,
we write $PU(A):=U(A)/\mathbb{T}$ and the topology of $PU(A)$ is  the quotient topology induced by the norm topology of $U(A)$.
Note that the quotient topology is metrizable via the metric
\[d(x, y):=\min\{||u-v||\mid u, v\in U(A),\; x=u\mathbb{T},\; y=v\mathbb{T}\},\quad x, y\in PU(A).\]

We refer to \cite{B} for the notation and basics of KK-theory.
For two C*-algebras $A, B$ and a $*$-homomorphism $\varphi : A\to B$,
we denote by
\[KK(\varphi)\in KK(A, B)\]
the KK-group and Kasparov module given by $\varphi$ (cf. \cite[Ex. 17.1.2]{B}, \cite[Sec. 2]{Dh}).
We denote by $K_i(A)$ the $i$-th K-group.
Note that the Bott periodicity gives us
\[K_{2k}(A)=K_0(S^{2k}A)=K_0(A),\quad K_{2k+1}(A)=K_1(S^{2k}A)=K_1(A),\; K_1(SA)=K_0(A)\]
where $S^iA:=C_0(0, 1)^{\otimes i}\otimes A$ is the $i$-fold suspension of $A$.
The algebra $C_0(0, 1)$ consists of the continuous functions on $[0, 1]$ vanishing at $\{0, 1\}$.
For a compact Hausdorff topological space $X$ (resp.  the sapce with base point $(X, *)$),
we denote by $C(X)$  (resp. $C_0(X, *)$) the algebra of continuous functions (resp. the algebra of continuous functions vanishing at $*$).
Note that the $i$-fold suspension is identified with the functions on the based $i$-dimensional sphere $(S^i, *)$:
\[S^i=C_0(0, 1)^{\otimes i}\cong C_0(S^i, *).\]

For a metrizable topological group $G$ with  a base point ${\rm id}\in G$,
the i-th homotopy group of $G$ is defined by
\[\pi_i(G):=\{\gamma : (S^i, *)\to (G, {\rm id})\mid \gamma (*)={\rm id}\}/\sim_{\text{h}}\]
where 
$\sim_h$ is the base point preserving homotopy equivalence.
So two based maps $\gamma_0, \gamma_1 : (S^i, *)\to (G, {\rm id})$ are equivalent if there exists a continuous map
\[h  : [0, 1]\times S^i\ni (t, x)\mapsto h(t, x)\in G,\quad h(0, x)=\gamma_0(x), \;h(1, x)=\gamma_1(x),\quad h(t, *)={\rm id}\]
(see \cite[Chap. 6]{DK}).
We denote by $[\gamma]\in \pi_i(G)$ the euivalence class of map $\gamma : S^i\to G$.
We write
\[\Omega G:=\{\gamma : S^1\to G\mid \gamma (*)={\rm id}\}=\{\gamma : [0, 1]\to G\mid \gamma(0)=\gamma(1)={\rm id}\},\]
and the following isomorphism
\[\pi_i(\Omega G)\cong \pi_{i+1}(G)\]
holds (see \cite[Chap. 6]{DK}, Appendix).

We denote by
\[[S^i, G]:=\{\gamma : S^i\to G\}/\sim_h\]
the set of homotopy equivalence classes of the continuous maps from $S^i$ to $G$,
where the homotopy may not preserve the base points,
and there is a natural map
\[\pi_i(G)\to [S^i, G]\]
which is a group homomorphism with respect to the group structure of $G$.
\begin{lem}\label{nb}
    If $G$ is path connected,
    the natural group homomorphism 
    \[\pi_i(G)\to [S^i, G]\]
    is bijective.
\end{lem}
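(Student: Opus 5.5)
We prove surjectivity and injectivity of the natural map $\pi_i(G)\to[S^i,G]$ directly, using the group structure of $G$ to move base points around.

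\emph{Surjectivity.} Let $\gamma : S^i\to G$ be an arbitrary continuous map and put $g:=\gamma(*)$. Define
\[\gamma'(x):=g^{-1}\gamma(x).\]
This is based, since $\gamma'(*)={\rm id}$. Path connectedness of $G$ gives a path $c:[0,1]\to G$ with $c(0)=g^{-1}$ and $c(1)={\rm id}$. Then $h(t,x):=c(t)\gamma(x)$ is a free homotopy from $\gamma'$ to $\gamma$, so the class of $\gamma$ in $[S^i,G]$ is the image of $[\gamma']\in\pi_i(G)$.

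\emph{Injectivity.} Suppose two based maps $\gamma_0,\gamma_1$ are freely homotopic via $h:[0,1]\times S^i\to G$. Set $c(t):=h(t,*)$, a loop at ${\rm id}$, and normalize:
\[h'(t,x):=c(t)^{-1}h(t,x).\]
This is continuous because multiplication and inversion in $G$ are continuous. It satisfies $h'(t,*)={\rm id}$, $h'(0,\cdot)=\gamma_0$ and $h'(1,\cdot)=\gamma_1$. Hence $h'$ is a base-point-preserving homotopy and $[\gamma_0]=[\gamma_1]$ in $\pi_i(G)$.

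The map is a homomorphism by the remark preceding the lemma, though bijectivity does not depend on this. The only points needing care are the continuity of $(t,x)\mapsto c(t)^{\pm1}\gamma(x)$, which holds because $G$ is a topological group, and the fact that path connectedness is used only in the surjectivity step. This base-point normalization is the one substantive idea. It uses the group structure of $G$ in place of the general action of $\pi_1$ on $\pi_i$, which here is trivial for the same reason. No genuine obstacle is expected.
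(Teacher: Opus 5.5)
Your proof is correct and follows the paper's argument. Surjectivity comes from translating $\gamma$ by a path from $\gamma(*)^{-1}$ to ${\rm id}$, and injectivity from normalizing a free homotopy to $h(t,*)^{-1}h(t,x)$; the only cosmetic difference is that the paper normalizes a homotopy to the constant map and then uses the homomorphism property (trivial kernel), whereas you normalize a homotopy between two arbitrary based maps, which makes the homomorphism property unnecessary.
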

\begin{proof}
First, we show surjectivity.
    For any continuous map $\gamma : S^i\to G$,
    one can find a continuous path $\{g_t\}_{t\in [0, 1]}\subset G$ with $g_0=\gamma (*), g_1={\rm id}$.
    Thus, one has \[[\gamma]=[g_1^{-1}\gamma]=[g_0^{-1}\gamma]\in\operatorname{Im}(\pi_i(G)\to [S^i, G]),\]
    and the map $\pi_i(G)\to [S^i, G]$ is surjective.

    Next, we show injectivity.
    Fix a continuous map $f : S^i\to G$ with $f(*)={\rm id}$,
    and assume that $[f]=[l]\in [S^i, G]$ where $l : S^i\ni x\mapsto {\rm id}\in G$ is a constant map.
    Then, there exists a continuous map
    \[h : [0, 1]\times S^i\ni (t, x)\mapsto h(t, x)\in G\]
    satisfying $h(0, x)=l(x), h(1, x)=f(x)$.
    Since $h(0, *)=l(*)={\rm id}=f(*)=h(1, *)$, the following continuous map
    \[[0, 1]\times S^i\ni (t, x)\mapsto h(t, *)^{-1}h(t, x)\in G\]
    gives a base point preserving homotopy equivalence between $f$ and $l$ (i.e., $[f]=[l]=0\in \pi_i(G)$).
    This completes the proof.
\end{proof}
\section{Preliminaries}\label{Pre}
In the sequel,
the C*-algebra $A$ is always assumed to be unital and simple.
Since the map $U(A)\ni u\mapsto \operatorname{Ad}u\in \operatorname{Inn}(A)$ is continuous,
one has a continuous bijection
\[PU(A)\to \operatorname{Inn}(A)\]
which is not homeomorphic in general because of the existence of non-trivial central sequences (see Remark \ref{not}).
\begin{ex}
Let $\mathbb{M}_n(\mathbb{C})$ be the algebra of $n\times n$-matrices.
    Since $U(\mathbb{M}_n(\mathbb{C}))$ is compact,
    the map $PU(\mathbb{M}_n(\mathbb{C}))\to \operatorname{Inn}(\mathbb{M}_n(\mathbb{C}))=\operatorname{Aut}(\mathbb{M}_n(\mathbb{C}))$
    is a homeomorphism.
\end{ex}

A unital C*-algebra $A$ is called Kirchberg algebra if it is separable, nuclear, simple and purely infinite simple (see \cite{R} for more details).
\begin{rem}\label{not}
It is well-known that every Kirchberg algebra tensorially absorbs $\mathcal{O}_\infty$:
\[\varphi : A\cong A\otimes (\mathcal{O}_\infty^{\otimes \mathbb{N}})=A\otimes\left( \bigotimes_{k=1}^\infty\mathcal{O}_\infty\right),\]
and there is a non-trivial central sequence of unitaries $U_n$ satisfying
\[U_n:=\varphi^{-1}(1_A\otimes (1_{\mathcal{O}_\infty}^{\otimes n-1}\otimes u\otimes 1_{\mathcal{O}_\infty}^{\otimes \infty})),\quad u\in U(\mathcal{O}_\infty), \quad C(\mathbb{T})=C^*(\{u\})\subset\mathcal{O}_\infty,\]
\[\lim_{n\to 0}||U_na-aU_n||=0,\quad ||U_n-z1_A||=||u-z||=2,\quad\text{for any}\;\; a\in A,\;\;z\in\mathbb{T}.\]
Since $\operatorname{Ad}U_n\to {\rm id}_A$ in $\operatorname{Inn}(A)$ and $U_n$ does not converge to a scalar,
the continuous bijection $PU(A)\to \operatorname{Inn}(A)$ is not homeomorphism for any unital Kirchberg algebras.
\end{rem}
For a unital Kirchberg algebra $A$,
one can compute the homotopy groups of $PU(A)$ by using the following.
\begin{thm}[{see \cite[Lem. 2.1.7]{P}, \cite[Lem. 2.1]{IM}}]\label{hg}
    For a unital Kirchberg algebra $A$,
    one has
    \[[S^i, U(A)]=K_1(C(S^i)\otimes A), \quad i\geq 0,\]
    \[\pi_i(U(A))=K_1(S^iA),\quad i\geq 0.\]
\end{thm}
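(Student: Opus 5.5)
The plan is to reduce both identities to the comparison between $\pi_0$ of a unitary group and $K_1$, applied to the algebra $B:=C(S^i)\otimes A\cong C(S^i, A)$. A continuous map $S^i\to U(A)$ is exactly a unitary of $B$, and a free homotopy of such maps is exactly a norm-continuous path in $U(B)$. Hence
\[[S^i, U(A)]=U(B)/U(B)_0,\]
where $U(B)_0$ is the path component of $1_B$. The first claim then amounts to showing that the natural map $U(B)/U(B)_0\to K_1(B)$ is bijective.

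For surjectivity, I will use that $A$ is purely infinite, hence properly infinite, so $1_B=1_{C(S^i)}\otimes 1_A$ is properly infinite in $B$. Cuntz's argument then shows that every class in $K_1(B)$ is represented by a unitary in $B$ itself. Concretely, a unitary in $\mathbb{M}_k(B)$ is compressed into the corner $B$ using isometries $s_1,\dots,s_k\in A$ with orthogonal ranges summing to a subprojection of $1$.

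Injectivity, i.e.\ $K_1$-injectivity of $B$, is the main obstacle, since $B$ is not simple and Cuntz's theorem for simple purely infinite algebras does not apply directly. I would first use that $A\cong A\otimes\mathcal{O}_\infty$, as recalled in Remark \ref{not}, so that $B\cong B\otimes\mathcal{O}_\infty$. I would then invoke the known $K_1$-injectivity of unital $\mathcal{O}_\infty$-absorbing (more generally properly infinite $\mathcal{Z}$-stable) C*-algebras. Alternatively, I would argue directly that a unitary $u\in U(B)$ with $u\oplus 1\sim_h 1$ in $\mathbb{M}_2(B)$ satisfies $u\sim_h 1$ in $U(B)$. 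To do this, conjugate the $2\times 2$ homotopy into $B$ via isometries $s_1,s_2\in 1\otimes\mathcal{O}_\infty$ with $s_1s_1^*+s_2s_2^*\le 1$. Then use that $u\mapsto s_1us_1^*+(1-s_1s_1^*)$ is homotopic to $u$. This holds because $s_1$ can be connected to $1$ through isometries inside the central copy of $\mathcal{O}_\infty$ up to an asymptotically central correction, and this is exactly where $\mathcal{O}_\infty$-absorption enters. This step is where I expect to spend the effort, or alternatively simply cite the literature (\cite{P}, \cite{IM}).

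For the based statement, consider the evaluation $\mathrm{ev}_*: U(B)\to U(A)$, which is split by constant functions. Its kernel $F=\{u\in U(B)\mid u(*)=1\}$ is the unitary group of the unitization of $S^iA$ with scalar part $1$, and $\pi_0(F)=\pi_i(U(A))$ by definition. The map $U(B)\ni u\mapsto u\cdot u(*)^{-1}\in F$ is a continuous retraction onto $F$. It follows that $\pi_0(F)\to\pi_0(U(B))$ is injective with image the kernel of $(\mathrm{ev}_*)_*:\pi_0(U(B))\to\pi_0(U(A))$. On the $K$-theory side, the split short exact sequence $0\to S^iA\to B\to A\to 0$ identifies $K_1(S^iA)$ with $\ker(K_1(B)\to K_1(A))$. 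The isomorphism of the first part, which is natural in $B$ and therefore compatible with $\mathrm{ev}_*$, restricts to $\pi_i(U(A))\cong K_1(S^iA)$.
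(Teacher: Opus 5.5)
The paper gives no proof of this statement; it is quoted from \cite[Lem. 2.1.7]{P} and \cite[Lem. 2.1]{IM}. Your main line is correct and is essentially the standard argument behind those citations. You identify $[S^i,U(A)]$ with $U(B)/U(B)_0$ for $B=C(S^i)\otimes A$. Surjectivity onto $K_1(B)$ comes from proper infiniteness of $1_B$. Injectivity comes from $B\cong B\otimes\mathcal{O}_\infty$ together with the known $K_1$-injectivity of $\mathcal{O}_\infty$-absorbing (indeed unital $\mathcal{Z}$-stable) algebras. You then pass to based maps using the retraction $u\mapsto u\,u(*)^*$ onto $F$ and split exactness of $0\to S^iA\to B\to A\to 0$; the splitting by constant functions is what lets the isomorphism for $B$ restrict to an isomorphism of the two kernels. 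The only non-formal input is therefore $K_1$-bijectivity for $C(S^i)\otimes A$, and for that you, like the paper, rely on the literature.

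Do not try to replace that citation by the ``direct'' argument you sketch, because as written it fails. A non-unitary isometry $s_1$ cannot be joined to $1$ by a norm-continuous path of isometries $s_t$. If it could, $s_ts_t^*$ would be a norm-continuous path of projections from $s_1s_1^*$ to $1$, forcing $1-s_1s_1^*$ to be unitarily equivalent to $0$. Also, a general $u\in B\cong B\otimes\mathcal{O}_\infty$ does not commute with $1\otimes\mathcal{O}_\infty$. Suppose you first move $u$ into $B\otimes 1$, say $u=u'\otimes 1$, and take $s_1=1\otimes t$. Then the needed homotopy $s_1us_1^*+(1-s_1s_1^*)\sim_h u$ is equivalent, after multiplying by $u^*$, to $u'^*\otimes e+1\otimes(1-e)\sim_h 1$. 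Here $e=1-tt^*$ is a nonzero projection with $[e]=0$ in $K_0(\mathcal{O}_\infty)$. That is again a $K_1$-injectivity statement, and the sketch does not supply it. So the citation carries the real content, exactly as in the paper.
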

\begin{prop}[{see Appendix}]\label{se}
Let $A$ be a unital simple C*-algebra.
    The map $U(A)\to PU(A)$ is a principal $\mathbb{T}$-bundle.
    In particular, this is a Serre fibration and one has the long exact sequence of homotopy groups:
    \[\cdots\to \pi_i(\mathbb{T})\to\pi_i(U(A))\to\pi_i(PU(A))\to\pi_{i-1}(\mathbb{T})\to\pi_{i-1}(U(A))\to\cdots\to \pi_1(PU(A))\to 0\]
\end{prop}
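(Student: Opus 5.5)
The plan is to exhibit explicit local sections of the quotient map $p : U(A)\to PU(A)$ and deduce local triviality. The structure group is $\mathbb{T}=\mathbb{T}1_A$, which acts freely on $U(A)$ by right multiplication $(u,z)\mapsto uz$. The orbits are exactly the fibres of $p$, so $PU(A)$ is the orbit space with the quotient topology. The first step is the standard observation that the quotient map by a group action is open: for $V\subset U(A)$ open, $p^{-1}(p(V))=\bigcup_{z\in\mathbb{T}}Vz$ is open. Hence a $\mathbb{T}$-invariant continuous map on an open $\mathbb{T}$-invariant subset of $U(A)$ descends to a continuous map on its image in $PU(A)$. I also check once that the metric $d$ on $PU(A)$ from Section 2 induces this quotient topology. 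This follows from compactness of $\mathbb{T}$: the min is attained, and $d(x,y)=\min_z\|u-vz\|$ is the Hausdorff-type orbit distance.

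Next I fix $x_0=u_0\mathbb{T}$ and a state $\phi$ on $A$, and set
\[W:=\{v\mathbb{T}\mid \|v-u_0z\|<1\ \text{for some}\ z\in\mathbb{T}\},\]
which is open, being the image of an open $\mathbb{T}$-invariant set. For $v$ in $p^{-1}(W)$ I put $\lambda(v):=\phi(u_0^*v)$. Since $\|u_0^*v-z1_A\|<1$, we get $|\lambda(v)-z|<1$, so $\lambda(v)\neq0$. I then define
\[\tilde s(v):=v\,\overline{\lambda(v)}/|\lambda(v)|.\]
Because $\lambda(vw)=\lambda(v)w$ for $w\in\mathbb{T}$, the map $\tilde s$ is $\mathbb{T}$-invariant and continuous. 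It therefore descends to a continuous section $s:W\to U(A)$ with $p\circ s={\rm id}_W$.

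The trivialization is $\Phi:W\times\mathbb{T}\ni(x,z)\mapsto s(x)z\in p^{-1}(W)$. It is a continuous, $\mathbb{T}$-equivariant bijection. Its inverse is $v\mapsto (p(v),\,s(p(v))^*v)$, where $s(p(v))^*v$ is a scalar in $\mathbb{T}$. This inverse is continuous because $s\circ p$ and multiplication are continuous, and the scalar is obtained continuously, e.g.\ as $\phi(s(p(v))^*v)$. This shows that $p$ is a principal $\mathbb{T}$-bundle. Note that simplicity is used only to ensure the centre, hence the fibre, is exactly $\mathbb{T}$.

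Finally, a locally trivial bundle is a Serre fibration; this is the standard local-to-global lifting argument for cubes, and in fact a Hurewicz fibration since $PU(A)$ is metrizable, hence paracompact. The long exact sequence of homotopy groups of a Serre fibration with fibre $\mathbb{T}$ (based at $1_A$) then gives the displayed sequence. Its tail ends in $\pi_1(PU(A))\to\pi_0(\mathbb{T})=0$. The only step needing genuine care is the continuity of the section. The phase-normalization trick via the state $\phi$ handles this, so I do not anticipate a serious obstacle.
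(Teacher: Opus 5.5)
Your proof is correct. Its skeleton matches the paper's: a continuous local section gives a local trivialization, hence a fibre bundle, hence a Serre fibration and the long exact sequence. The difference lies entirely in how the section is built.

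\textbf{The paper's section.} The paper works only on $q(B_\epsilon(1_A))$ and normalizes the phase spectrally, by $s(q(v))=e^{-it_v}v$, where $t_v$ is the largest argument in $\sigma(v)\subset\{e^{it}\mid |t|\le\pi/8\}$. It then needs a separate well-definedness lemma and a functional-calculus continuity lemma, which uses continuity of $w\mapsto\operatorname{Arg}(w)$ and of the top of its spectrum. Finally it transports the section to an arbitrary point using that $PU(A)$ is a topological group.

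\textbf{Your section.} Your normalization $v\mapsto v\,\overline{\phi(u_0^*v)}/|\phi(u_0^*v)|$ is manifestly $\mathbb{T}$-invariant and continuous. It works directly around any $u_0$, on the fairly large set where the orbit distance to $u_0\mathbb{T}$ is $<1$. It needs neither spectral theory nor the group structure of $PU(A)$. The price is a non-canonical choice of state, which is irrelevant for local triviality; the paper's section, by contrast, is choice-free.

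\textbf{The tail of the sequence.} Your handling of it is also fine. Exactness at $\pi_1(PU(A))$ only uses $\pi_0(\mathbb{T})=0$, so you do not need the paper's passage to identity components.

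\textbf{One small correction.} Your closing remark about simplicity is slightly off. The fibre of $U(A)\to U(A)/\mathbb{T}$ is $\mathbb{T}$ by construction, so neither your proof nor the paper's actually uses simplicity. Simplicity matters only for making $PU(A)\to\operatorname{Inn}(A)$ injective, since it forces the centre to be $\mathbb{C}1_A$.
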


A typical example of Kirchberg algebra is the Cuntz algebra $\mathcal{O}_n,\;\; (n=2, 3, \ldots, \infty)$.
This is the following universal C*-algebra (see \cite{C}):
\[\mathcal{O}_n:=C^*_\text{univ}(\{S_1,\ldots, S_n\mid S^*_iS_j=\delta_{i, j},\; \sum_{i=1}^nS_iS_i^*=1_{\mathcal{O}_n}\}),\]
\[\mathcal{O}_\infty:=C^*_\text{univ}(\{S_1, S_2, \ldots\mid S^*_iS_j=\delta_{i, j}, \; i\in \mathbb{N}\}).\]
The K-theory of the Cuntz algebras are computed in \cite{C1} as follows:
\[K_0(\mathcal{O}_n)=\mathbb{Z}/(n-1)\mathbb{Z},\quad K_1(\mathcal{O}_n)=0,\]
\[K_0(\mathcal{O}_\infty)=\mathbb{Z},\quad K_1(\mathcal{O}_\infty)=0.\]
Combining Theorem \ref{hg}, Proposition \ref{se} and the above K-groups,
one has the following.
\begin{cor}\label{hpu}
For $n<\infty$, one has
    \[\pi_0(PU(\mathcal{O}_n))=\pi_1(PU(\mathcal{O}_n))=0,\quad \pi_2(PU(\mathcal{O}_n))=\mathbb{Z},\quad \pi_{k\geq 3}(PU(\mathcal{O}_n))=K_{k+1}(\mathcal{O}_n).\]
\end{cor}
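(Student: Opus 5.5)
The plan is to read off all the groups from the long exact sequence of Proposition \ref{se} for $A=\mathcal{O}_n$, using Theorem \ref{hg} and the K-groups of $\mathcal{O}_n$ from \cite{C1}. I treat the low degrees one at a time, since that is where the circle $\mathbb{T}$ contributes.

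First, I record the homotopy groups of $U(\mathcal{O}_n)$. By Theorem \ref{hg} and Bott periodicity, $\pi_i(U(\mathcal{O}_n))=K_1(S^i\mathcal{O}_n)=K_{i+1}(\mathcal{O}_n)$. Hence $\pi_i(U(\mathcal{O}_n))=0$ for $i$ even and $\pi_i(U(\mathcal{O}_n))=\mathbb{Z}/(n-1)\mathbb{Z}$ for $i$ odd. In particular $U(\mathcal{O}_n)$ is path connected. Since $PU(\mathcal{O}_n)$ is a continuous image of it, $\pi_0(PU(\mathcal{O}_n))=0$. For the circle, $\pi_1(\mathbb{T})=\mathbb{Z}$ and $\pi_i(\mathbb{T})=0$ for $i\neq 1$.

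Next come the degrees $k\geq 3$. Here $\pi_k(\mathbb{T})=\pi_{k-1}(\mathbb{T})=0$, so the exact segment
\[\pi_k(\mathbb{T})\to\pi_k(U(\mathcal{O}_n))\to\pi_k(PU(\mathcal{O}_n))\to\pi_{k-1}(\mathbb{T})\]
gives $\pi_k(PU(\mathcal{O}_n))\cong\pi_k(U(\mathcal{O}_n))=K_{k+1}(\mathcal{O}_n)$.

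The remaining degrees $1$ and $2$ are governed by the segment
\[0=\pi_2(U(\mathcal{O}_n))\to\pi_2(PU(\mathcal{O}_n))\to\pi_1(\mathbb{T})=\mathbb{Z}\xrightarrow{\iota_*}\pi_1(U(\mathcal{O}_n))\to\pi_1(PU(\mathcal{O}_n))\to 0.\]
Everything therefore reduces to identifying $\iota_*$, which is induced by the inclusion $\mathbb{T}=\mathbb{T}1\subset U(\mathcal{O}_n)$. I expect this to be the only real point.

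The generator of $\pi_1(\mathbb{T})$ is the loop $t\mapsto e^{2\pi i t}1_{\mathcal{O}_n}$. Under $\pi_1(U(\mathcal{O}_n))=K_1(S\mathcal{O}_n)\cong K_0(\mathcal{O}_n)$, this loop represents the image of the Bott element under the map induced by the unital inclusion $\mathbb{C}\to\mathcal{O}_n$. This uses the standard description of the Bott map $K_0(A)\to K_1(SA)$, which sends $[p]$ to the class of the loop $t\mapsto e^{2\pi i t}p+(1-p)$. Taking $p=1$, it follows that $\iota_*(m)=m[1_{\mathcal{O}_n}]$.

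By \cite{C1}, $[1_{\mathcal{O}_n}]$ generates $K_0(\mathcal{O}_n)=\mathbb{Z}/(n-1)\mathbb{Z}$, so $\iota_*$ is the quotient map $\mathbb{Z}\to\mathbb{Z}/(n-1)\mathbb{Z}$. Surjectivity of $\iota_*$ gives $\pi_1(PU(\mathcal{O}_n))=0$. Injectivity of $\pi_2(PU(\mathcal{O}_n))\to\mathbb{Z}$ shows that $\pi_2(PU(\mathcal{O}_n))\cong\ker\iota_*=(n-1)\mathbb{Z}\cong\mathbb{Z}$. This completes all the claimed groups.
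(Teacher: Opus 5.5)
Your proposal is correct and follows the paper's route: the paper obtains the corollary by combining the long exact sequence of Proposition \ref{se} with Theorem \ref{hg} and Cuntz's computation of the K-groups, exactly as you do. You also make explicit the one step the paper leaves implicit. That step is that $\pi_1(\mathbb{T})\to\pi_1(U(\mathcal{O}_n))\cong K_0(\mathcal{O}_n)$ sends the generator to the Bott image of $[1_{\mathcal{O}_n}]$, which generates $K_0(\mathcal{O}_n)$; this surjectivity is what gives $\pi_1(PU(\mathcal{O}_n))=0$, whereas $\pi_2(PU(\mathcal{O}_n))\cong\mathbb{Z}$ would already follow because any homomorphism $\mathbb{Z}\to\mathbb{Z}/(n-1)\mathbb{Z}$ has kernel isomorphic to $\mathbb{Z}$.
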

In the rest of this section,
we recall 
Izumi--Matui's computation of $\pi_i(\Omega\operatorname{Aut}(A\otimes \mathbb{K}))$.
In \cite{IM},
they show that every element of $\pi_i(\Omega\operatorname{Aut}(A\otimes \mathbb{K})), \quad i\geq 0$ is given by the following data:
\begin{equation} u : [1/2, 1)\times S^i\ni (t, x)\mapsto u(t, x)\in U(A) \label{central}
\end{equation}
\begin{equation*}
    u(t, *)=1_A,\quad \sup_{x\in S^i}||u(t, x)a-au(t, x)||\to 0,\;\; (t\to 1),
\end{equation*}
\begin{equation}v : [0, 1/2]\times S^i\to U(\mathcal{M}(A\otimes \mathbb{K})),\label{pa2}
\end{equation}
\begin{equation*}v(0, x)=v(t, *)=1_{A}\otimes 1_{\mathcal{M}(\mathbb{K})},\quad v(1/2, x)=u(1/2, x)\otimes 1_{\mathcal{M}(\mathbb{K})},
\end{equation*}
where all maps are norm continuous.
From the above unitaries,
one has the following automorphisms
\[\operatorname{Ad}U(t, x)\in \operatorname{Aut}(A\otimes\mathbb{K}),\]
\[U(t, x):=v(t, x),\;\; (0\leq t\leq 1/2),\quad U(t, x):=u(t, x)\otimes 1_{\mathcal{M}(\mathbb{K})},\;\; (1/2\leq t < 1),\]
and this gives a map
\[\rho^u : S^i\ni x\mapsto (t\mapsto \operatorname{Ad} U(t, x))\in\Omega\operatorname{Aut}(A\otimes\mathbb{K}),\]
satisfying
\[\rho^u_x : t\mapsto \operatorname{Ad}U(t, x),\;\; 0\leq t<1,\quad\rho^u_x : 1\mapsto {\rm id}_{A\otimes\mathbb{K}},\]
which is well-defined because
\[\operatorname{Ad} U(0, x)=\operatorname{Ad}U(t, *)={\rm id}_{A\otimes\mathbb{K}},\]
\[\operatorname{Ad}U(t, x)(a\otimes k)=(u(t, x)au(t, x)^*)\otimes k\to a\otimes k,\;\; (t\to 1), \;\; a\in A,\; k\in\mathbb{K}.\]

\begin{thm}[{\cite[Thm. 2.6, Lem. 2.3.]{IM}}]\label{loop}
Let $A$ be a unital Kirchberg algebra.
   Every element of $\pi_i(\Omega\operatorname{Aut}(A\otimes \mathbb{K}))\; (i\geq 0)$ is given by $[\rho^u]$ and the class $[\rho^u]$ only depends on the homotopy class of $u$ in eq. (\ref{central}) and does not depend on the choice of $v$ in eq. (\ref{pa2}).
\end{thm}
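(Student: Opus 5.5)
The plan is to treat $\Omega\operatorname{Aut}(A\otimes\mathbb{K})$ through the unitary group $U(\mathcal{M}(A\otimes\mathbb{K}))$. Since $A\otimes\mathbb{K}$ is stable, this group is contractible, both in the norm topology (Cuntz--Higson / Mingo) and in the strict topology. I would prove the two assertions of Theorem \ref{loop} in order: first well-definedness (independence of $v$, dependence only on the homotopy class of $u$), then surjectivity.

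\textbf{Well-definedness.} Fix $u$ and let $v,v'$ be two choices as in eq. (\ref{pa2}).
\begin{itemize}
\item The family $w:=v'v^*$ on $[0,1/2]\times S^i$ equals $1$ on the set $\{0\}\times S^i\cup\{1/2\}\times S^i\cup[0,1/2]\times\{*\}$. So $w$ is a based map from a sphere into $U(\mathcal{M}(A\otimes\mathbb{K}))$.
\item Norm contractibility makes $w$ null-homotopic relative to that set. Multiplying $v$ by such a homotopy connects $v$ to $v'$, keeping the boundary values and the part on $[1/2,1)$ fixed.
\item Applying $\operatorname{Ad}$ then gives a homotopy $\rho^{u,v}\simeq\rho^{u,v'}$ in $\Omega\operatorname{Aut}(A\otimes\mathbb{K})$.
\end{itemize}
Next, let $u_s$ be a homotopy of families as in eq. (\ref{central}), with the central condition uniform in $s$.
\begin{itemize}
\item The required $v_s$ is a map from $[0,1]_s\times[0,1/2]\times S^i$ into $U(\mathcal{M}(A\otimes\mathbb{K}))$. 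It is already prescribed on the faces $t=0$, $t=1/2$, the base point, and (say) $s=0,1$.
\item $U(\mathcal{M}(A\otimes\mathbb{K}))$ is an open subset of a Banach space, hence an ANR, and it is contractible. So this boundary map extends over the whole cube.
\item The extension gives a continuous family $\rho^{u_s}$.
\end{itemize}
Existence of $v$ at all follows the same way: $x\mapsto u(1/2,x)\otimes1$ is a based map into a contractible space, so it extends over $[0,1/2]\times S^i$.

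\textbf{Surjectivity.} Let $\alpha:[0,1]\times S^i\to\operatorname{Aut}(A\otimes\mathbb{K})$ satisfy $\alpha(0,x)=\alpha(1,x)=\alpha(t,*)={\rm id}$.
\begin{itemize}
\item Every automorphism of $A\otimes\mathbb{K}$ extends to $\mathcal{M}(A\otimes\mathbb{K})$. Let $U_s$ denote the unitary group of $\mathcal{M}(A\otimes\mathbb{K})$ with the strict topology. The map $U_s\to\operatorname{Aut}(A\otimes\mathbb{K})$, $W\mapsto\operatorname{Ad}W$, is a principal $\mathbb{T}$-bundle onto its image, which is a union of components.
\item By homotopy lifting I obtain a strictly continuous $W(t,x)$ for $t<1$ with $W(0,x)=W(t,*)=1$ and $\operatorname{Ad}W(t,x)=\alpha(t,x)$.
\item Put $p:=1_A\otimes e_{11}$. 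Then $q_{t,x}:=\alpha(t,x)(p)\to p$ in norm as $t\to1$, uniformly in $x$ by compactness of $S^i$.
\item For $t\ge t_0$ the standard partial isometry $z_{t,x}$ with $z_{t,x}q_{t,x}z_{t,x}^*=p$ is a norm-close-to-$1$ unitary in $\widetilde{A\otimes\mathbb{K}}$. Replacing $W$ by $zW$ (and connecting $z$ back to $1$ on $[0,t_0]$ inside $1+A\otimes\mathbb{K}$) changes $\alpha$ only by a homotopy, because $\operatorname{Ad}$ of a norm-continuous family in $\widetilde{A\otimes\mathbb{K}}$ is continuous.
\item Now $W$ commutes with $p$ for $t\ge t_0$. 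Its corner $u(t,x):=pWp\in U(A)$ is approximately central, since $\alpha\to{\rm id}$ pointwise.
\item The complementary corner lies in $U((1-p)\mathcal{M}(A\otimes\mathbb{K})(1-p))$, which is again the unitary group of a stable multiplier algebra, hence contractible. Using this I deform it to $(1-p)\otimes$(the identification $A\otimes\mathbb{K}\cong A\otimes\mathbb{K}\otimes\mathbb{K}$ with $p\leftrightarrow$ a rank-one projection). This brings $W$ to the form $u\otimes1_{\mathcal{M}(\mathbb{K})}$ near $t=1$, after reparametrizing $[t_0,1)$ to $[1/2,1)$.
\item The remaining piece on $[0,1/2]$ is a valid $v$, up to replacing strict continuity by norm continuity. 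I would do that by the well-definedness argument above, using contractibility of $U_s$ and then choosing a norm-continuous $v$ with the same endpoint data.
\end{itemize}

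I expect the main obstacle to be this last normalization. The difficulty is showing that the lift can be made norm continuous and of the exact form $u(t,x)\otimes1$ near $t=1$, uniformly in $x$, while tracking base points. The corner/complement decomposition and the uniform convergence $q_{t,x}\to p$ must be handled carefully, and I would first try to do it via Lemma \ref{nb} applied to the contractible complementary corner.
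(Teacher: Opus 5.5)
\textbf{Verdict: the proposal has a genuine gap in the surjectivity half.} The paper gives no proof of this theorem; it is quoted from \cite{IM}.

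Your arguments for the existence of $v$, for independence of the choice of $v$, and for invariance under homotopies of $u$ (with uniform asymptotic centrality) are sound. They reduce to extension problems over CW pairs into the norm-contractible group $U(\mathcal{M}(A\otimes\mathbb{K}))$, and $\operatorname{Ad}$ of a norm-continuous family is point-norm continuous.

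The surjectivity half rests on a false claim: that $W\mapsto\operatorname{Ad}W$ from the strictly topologized unitary group of $\mathcal{M}(A\otimes\mathbb{K})$ is a principal $\mathbb{T}$-bundle onto a union of components of $\operatorname{Aut}(A\otimes\mathbb{K})$. It fails in two ways.
\begin{itemize}
\item \emph{No continuous local section at ${\rm id}$.} Take the central sequence $U_n$ of Remark \ref{not}. Then $\operatorname{Ad}(U_n\otimes 1_{\mathcal{M}(\mathbb{K})})\to{\rm id}$ in the point-norm topology. But for every choice of $\lambda_n\in\mathbb{T}$ one has $\|(\lambda_nU_n\otimes 1-1)(1_A\otimes e_{11})\|=\|\lambda_nU_n-1_A\|=2$, so no rescaling converges strictly.
\item \emph{The image is not a union of components.} Let $\beta\in\operatorname{Aut}(A)$ be outer but homotopic to ${\rm id}_A$, e.g.\ a gauge automorphism $\gamma_z$, $z\neq 1$, of $\mathcal{O}_n$. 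Then $\beta\otimes{\rm id}_{\mathbb{K}}$ lies in the identity component. If it equalled $\operatorname{Ad}W$, then $W$ would commute with $p=1_A\otimes e_{11}$, and $pWp\in U(A)$ would implement $\beta$, a contradiction.
\end{itemize}
So a representative $\alpha(t,x)$ need not even be multiplier-inner pointwise. Where it is, the implementers cannot in general be chosen continuously.

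On the corner $p(A\otimes\mathbb{K})p\cong A$ the strict and norm topologies agree. Your lift would therefore produce norm-continuous unitary implementers for point-norm continuous families of inner automorphisms of $A$. The failure of exactly this is what Remark \ref{not} and Theorem \ref{M} are about. The real obstacle is this first lifting step, not the final normalization you flag. Everything built on the lift is unsupported: the correction by $z_{t,x}$, the corner $pWp$, and the deformation of the complementary corner.

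\textbf{What is missing} is a rigidity input specific to Kirchberg algebras. You must show that each class in $\pi_i(\Omega\operatorname{Aut}(A\otimes\mathbb{K}))$ has a representative implemented by norm-continuous unitaries for $t<1$. Such implementers in general exist only asymptotically, which is why the data in eq. (\ref{central}) has an asymptotically central tail instead of closing up at a scalar. Classification has to enter here. Two possible routes:
\begin{itemize}
\item Use asymptotic unitary equivalence of unital homomorphisms $A\to C(X)\otimes A$ with equal KK-classes (\cite[Thm. 2.9]{Dh}), as in the proof of Lemma \ref{fh}.
\item Identify $\pi_{i+1}(\operatorname{Aut}(A\otimes\mathbb{K}))$ with a KK-group via \cite{Dh}, and realize every class by asymptotically central paths.
\end{itemize}
Without some such input, the surjectivity assertion remains unproved.
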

\section{Proof of Theorem \ref{M}}
We need several lemmas.
\begin{lem}\label{inj}
    There exists a continuous map
    \[[0, 1)\ni t\mapsto J_t\in \operatorname{End}(\mathcal{O}_\infty)\]
    satisfying
    \[J_0={\rm id}_{\mathcal{O}_\infty},\quad||J_t(b)a-aJ_t(b)||\to 0,\quad (t\to 1),\quad a, b\in\mathcal{O}_\infty,\]
   where $\operatorname{End}(\mathcal{O}_\infty)$ is the set of unital $*$-endomorphisms equipped with the point wise norm topology. 
\end{lem}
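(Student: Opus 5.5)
We construct $J_t$ as a continuous path of unital endomorphisms of $\mathcal{O}_\infty$ that starts at the identity and pushes $\mathcal{O}_\infty$ asymptotically into the relative commutant of any fixed element. Use the isomorphism $\varphi:\mathcal{O}_\infty\cong \mathcal{O}_\infty\otimes\mathcal{O}_\infty^{\otimes\mathbb{N}}$ from Remark \ref{not}, with $A=\mathcal{O}_\infty$. Let $\sigma_n$ be the unital endomorphism of $\mathcal{O}_\infty$ that sends $b$ to $\varphi^{-1}(1\otimes 1^{\otimes n-1}\otimes b\otimes 1^{\otimes\infty})$, i.e.\ it places $b$ into the $n$-th tensor slot. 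For each fixed $a$ and $b$ we have $\|\sigma_n(b)a-a\sigma_n(b)\|\to0$. This holds because for $a$ in the algebraic tensor product supported on the first $N$ slots the commutator vanishes for $n>N$, and such $a$ are dense. The task is then to connect $\mathrm{id}=\sigma_0$ (the slot-zero copy) to $\sigma_1$, $\sigma_1$ to $\sigma_2$, and so on, by paths that become asymptotically central. We then reparametrize the $n$-th path to $[1-2^{-n},1-2^{-n-1}]$.

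The key tool is that the flip automorphism of $\mathcal{O}_\infty\otimes\mathcal{O}_\infty$ is connected to the identity among unital endomorphisms in a controlled way. It is approximately inner and in fact asymptotically unitarily equivalent to the identity composed with the first-factor embedding. More usefully, the two embeddings $b\mapsto b\otimes1$ and $b\mapsto 1\otimes b$ of $\mathcal{O}_\infty$ into $\mathcal{O}_\infty\otimes\mathcal{O}_\infty$ are homotopic through unital $*$-homomorphisms. To see this, note that by Kirchberg--Phillips classification all unital endomorphisms of $\mathcal{O}_\infty$ into a Kirchberg algebra with the same $KK$-class are asymptotically unitarily equivalent. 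Both embeddings induce the same class, namely the unit class, since $KK(\mathcal{O}_\infty,B)\cong K_0(B)$ via $[1]$. Asymptotic unitary equivalence, together with the connectedness of $U(\mathcal{O}_\infty\otimes\mathcal{O}_\infty)$ (its $K_1$ is $0$ and it is purely infinite, so Theorem \ref{hg} gives $\pi_0(U)=K_1=0$), yields a continuous path $\{\eta_s\}_{s\in[0,1]}$ of unital homomorphisms with $\eta_0=b\mapsto b\otimes1$ and $\eta_1=b\mapsto1\otimes b$. We conjugate by $\varphi^{-1}$ and apply this in slots $n$ and $n+1$ with identity elsewhere. This gives a path from $\sigma_n$ to $\sigma_{n+1}$ whose image lies in $\varphi^{-1}(1^{\otimes n}\otimes\mathcal{O}_\infty\otimes\mathcal{O}_\infty\otimes 1^{\otimes\infty})$ (slot $0$ is the original $A$-factor). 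For the first segment we also need a path from $\mathrm{id}$ to $\sigma_1$, which is obtained the same way in slots $0$ and $1$.

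With the concatenated path $J_t$, continuity on $[0,1)$ is clear, since each segment is pointwise continuous and the segments match at the endpoints. For asymptotic centrality, fix $a,b$ and $\epsilon>0$. Choose $a'$ supported on the first $N$ slots with $\|a-a'\|<\epsilon$. For $t\geq 1-2^{-N-1}$, the image $J_t(b)$ lies in tensor slots $\geq N+1$ and hence commutes exactly with $a'$. So $\|[J_t(b),a]\|\le 2\|b\|\epsilon$, which gives the required limit.

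The main obstacle is the homotopy between the two tensor-factor embeddings. If one prefers to avoid the full classification theorem, a more hands-on route works. Identify $\mathcal{O}_\infty\otimes\mathcal{O}_\infty\cong\mathcal{O}_\infty$ and use that the flip on $\mathcal{O}_\infty\otimes\mathcal{O}_\infty$ is approximately inner with a path of unitaries, since $\mathcal{O}_\infty$ is strongly self-absorbing. This yields a continuous path $w_s$ of unitaries, $s\in[0,1)$, with $\mathrm{Ad}\,w_s(b\otimes1)\to1\otimes b$. Connecting $w_0$ to $1$ in the unitary group and letting $s\to1$ gives the required continuous path on the closed interval once the endpoint is defined as the flip embedding. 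This is the point I would check most carefully.
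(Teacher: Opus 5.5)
Your argument is essentially the paper's: you identify $\mathcal{O}_\infty$ with an infinite tensor power, use Kirchberg--Phillips (both factor embeddings have the unit class in $KK(\mathcal{O}_\infty,\mathcal{O}_\infty^{\otimes 2})\cong\mathbb{Z}$) plus connectedness of the unitary group to homotope $b\mapsto b\otimes 1$ to $b\mapsto 1\otimes b$, shift the image one slot at a time, and concatenate on $[0,1)$; your density argument for asymptotic centrality spells out what the paper leaves implicit. One slip: $\sigma_0$ is not $\mathrm{id}$, since otherwise $\varphi$ would be $b\mapsto b\otimes 1^{\otimes\infty}$, which is not surjective. The first segment should therefore be a Kirchberg--Phillips path in $\operatorname{End}(\mathcal{O}_\infty)$ directly from $\mathrm{id}$ to $\sigma_1$, exactly the paper's $\psi_t$; it needs no centrality because it lives on a compact initial interval.
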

The above lemma is well-known and we write proof for the convenience of readres.
\begin{proof}
    By identifying $\varphi : \mathcal{O}_\infty\cong \mathcal{O}_\infty^{\otimes \mathbb{N}}$,
    the embeddings
    \[j_n : \mathcal{O}_\infty\ni x\mapsto 1^{\otimes n-1}_{\mathcal{O}_\infty}\otimes x\otimes 1_{\mathcal{O}_\infty}^{\otimes\infty}\in\mathcal{O}_\infty^{\otimes\mathbb{N}},\quad n\in\mathbb{N},\]
    gives a unital endomorphisms $\varphi^{-1}\circ j_n\in \operatorname{End}(\mathcal{O}_\infty)$.
    Since \[KK(1_{\mathcal{O}_\infty}\otimes {\rm id}_{\mathcal{O}_\infty})=KK({\rm id}_{\mathcal{O}_\infty}\otimes 1_{\mathcal{O}_\infty})=1\in KK(\mathcal{O}_\infty, \mathcal{O}_\infty^{\otimes 2})=\mathbb{Z},\]
    Kirchberg--Phillips' theorem (cf. \cite[Sec. 2]{Dh}) and $\pi_0(U(\mathcal{O}_\infty))=K_1(\mathcal{O}_\infty)=0$ give a continuous path
\[\alpha_t : \mathcal{O}_\infty\to\mathcal{O}_\infty^{\otimes 2},\quad t\in [0, 1],\;\;\alpha_0={\rm id}\otimes 1,\;\;\alpha_1=1\otimes{\rm id}.\]
    Thus, one obtains the continuous paths
    \[[n, n+1]\ni t\mapsto \varphi^{-1}\circ j_t\in \operatorname{End}(\mathcal{O}_\infty),\quad n\in \mathbb{N},\]
     satisfying
     \[j_t(\mathcal{O}_\infty)\subset 1_{\mathcal{O}_\infty}^{\otimes n-1}\otimes \mathcal{O}_\infty^{\otimes 2}\otimes 1_{\mathcal{O}_\infty}^{\otimes\infty}\subset \mathcal{O}_\infty^{\otimes\mathbb{N}}.\]
     Kirchberg--Phillips's theorem also gives a path
    \[[0, 1]\ni t\mapsto \psi_t\in \operatorname{End}(\mathcal{O}_\infty),\quad \psi_0={\rm id}_{\mathcal{O}_\infty},\;\; \psi_1=\varphi^{-1}\circ j_1.\]
    Now one obtains the desired map $J :[0, 1)\cong [0, +\infty)\to\operatorname{End}(\mathcal{O}_\infty)$ by concatenating the above paths.
\end{proof}
\begin{lem}\label{fh}
    The map $\pi_i(\operatorname{Inn}(\mathcal{O}_n))\to\pi_i(\operatorname{Aut}(\mathcal{O}_n)),\;\; (i\geq 0)$ is injective.
\end{lem}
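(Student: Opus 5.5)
The plan is to take a based map $\gamma : S^i\to \operatorname{Inn}(\mathcal{O}_n)$ that is null-homotopic in $\operatorname{Aut}(\mathcal{O}_n)$ and convert the null-homotopy into one inside $\operatorname{Inn}(\mathcal{O}_n)$. By Lemma \ref{nb} we may ignore base points. The device for this conversion is the asymptotically central family $J_t$ of Lemma \ref{inj}.

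\textbf{Setup.} Fix an isomorphism $\mathcal{O}_n\cong\mathcal{O}_n\otimes\mathcal{O}_\infty$, and work in $A:=\mathcal{O}_n\otimes\mathcal{O}_\infty$.

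\begin{enumerate}
\item \emph{Lifting $\gamma$.} Write $\gamma(x)=\operatorname{Ad}u_x$. Since the bijection $PU(A)\to\operatorname{Inn}(A)$ need not be a homeomorphism, the choice $x\mapsto u_x$ need not be norm continuous. First I would reduce to the case where $x\mapsto u_x$ is norm continuous, up to homotopy inside $\operatorname{Inn}$. The idea is to replace $\gamma$ by $x\mapsto(\operatorname{id}\otimes J_t)\circ\gamma(x)\circ(\operatorname{id}\otimes J_t)^{-1}$-type conjugates, pushing the noncontinuity into a tensor factor where it disappears in the limit.

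\item \emph{Making the null-homotopy asymptotically inner.} Let $h:[0,1]\times S^i\to\operatorname{Aut}(A)$ be the null-homotopy, with $h_0=\gamma$ and $h_1=\operatorname{id}$. Automorphisms of $\mathcal{O}_n$ homotopic to the identity are asymptotically inner. I want a parametrized version: norm-continuous unitaries $w(s,x,t)\in U(A)$, $t\in[0,1)$, with $\operatorname{Ad}w(s,x,t)\to h_s(x)$ pointwise as $t\to1$, uniformly in $(s,x)$. To build these, compare $h_s(x)\otimes\operatorname{id}_{\mathcal{O}_\infty}$ with $h_s(x)\otimes 1$ inside $A\otimes\mathcal{O}_\infty$ via the half-flip on $\mathcal{O}_\infty\otimes\mathcal{O}_\infty$, as in Dadarlat--Pennig. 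This is where $J_t$ enters: the unitaries $(\operatorname{id}\otimes J_t)(\cdot)$ are asymptotically central in the $\mathcal{O}_\infty$-direction.

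\item \emph{Gluing.} Define $H(s,x):=\operatorname{Ad}w(s,x,\tau(s))$ for a reparametrization $\tau$ with $\tau(s)\to1$ near the ends, and correct the endpoints using $\gamma(x)=\operatorname{Ad}u_x$ and $\operatorname{id}=\operatorname{Ad}1$. Pointwise convergence plus exact agreement at the ends then give a homotopy $[0,1]\times S^i\to\operatorname{Inn}(A)$ from $\gamma$ to a constant, continuous in the pointwise norm topology. Alternatively, the gluing can be phrased through Theorem \ref{loop}. The difference between $\operatorname{Ad}w(0,x,t)$ and $\operatorname{Ad}u_x$ is implemented by asymptotically central unitaries $u_x^*w(0,x,t)$ as in eq. (\ref{central}). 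Theorem \ref{loop} says that only the homotopy class of such data matters, and this lets me splice the pieces.
\end{enumerate}

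\textbf{Main obstacle.} I expect step 2, the continuous-in-parameters asymptotic innerness, to be the main obstacle. Individual automorphisms are handled by Kirchberg--Phillips, but here we need uniformity over the compact family $[0,1]\times S^i$ together with the boundary conditions. My first attempt would be to apply Kirchberg--Phillips to the single $*$-homomorphism $\mathcal{O}_n\to C([0,1]\times S^i)\otimes\mathcal{O}_n$ induced by $h$. That gives asymptotic unitary equivalence with the constant embedding $a\mapsto 1\otimes a$, and hence the required $w(s,x,t)$ all at once. The boundary conditions would then be fixed by multiplying by unitaries coming from $\pi_\ast(U(\mathcal{O}_n))$, as computed in Theorem \ref{hg}.
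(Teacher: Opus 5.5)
Your Step 1 would fail and should be dropped. First, $J_t$ is a non-surjective endomorphism, so $(\mathrm{id}\otimes J_t)^{-1}$ is meaningless. More seriously, the reduction is false in general. The paper's argument with $\theta$ and $J_t$ shows that $[\operatorname{Ad}u]=0$ in $[S^i,\operatorname{Inn}(\mathcal{O}_n)]$ for \emph{every} norm-continuous $u: S^i\to U(\mathcal{O}_n)$. So if every $\gamma$ could be homotoped inside $\operatorname{Inn}$ to a map with a norm-continuous lift, all $\pi_i(\operatorname{Inn}(\mathcal{O}_n))$ would vanish. That contradicts $\pi_{2k+1}(\operatorname{Inn}(\mathcal{O}_n))\cong\mathbb{Z}/(n-1)\mathbb{Z}\neq 0$ for $n\geq 3$, which follows from the surjectivity half of Theorem \ref{M} and does not use this lemma. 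Any such reduction must use that $\gamma$ is null in $\operatorname{Aut}$, and conjugating by $J_t$ does not. Likewise, you do not need the half-flip mechanism in Step 2, the boundary correction by unitaries from $\pi_*(U(\mathcal{O}_n))$, or the detour through Theorem \ref{loop} (which concerns $\Omega\operatorname{Aut}(\mathcal{O}_n\otimes\mathbb{K})$).

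What remains is a correct proof that differs from the paper's: your ``first attempt'' together with the reparametrization in Step 3.
\begin{itemize}
\item The map $h: \mathcal{O}_n\to C([0,1]\times S^i)\otimes\mathcal{O}_n$ given by the null-homotopy is homotopic to $1\otimes\mathrm{id}$ (contract $s$ to $1$).
\item Hence \cite[Thm.~2.9, Lem.~3.1]{Dh}, applied over $X=[0,1]\times S^i$ instead of $S^i$, gives norm-continuous $w(s,x,t)$, $t\in[0,1)$, with $\sup_{(s,x)}\|\operatorname{Ad}w(s,x,t)(a)-h_s(x)(a)\|\to 0$ as $t\to 1$ for each $a$.
\item Choose $\tau$ with $\tau(0)=\tau(1)=1$ and $\tau<1$ on $(0,1)$. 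Put $H(s,x)=\operatorname{Ad}w(s,x,\tau(s))$ for $0<s<1$, $H(0,\cdot)=\gamma$ and $H(1,\cdot)=\mathrm{id}$.
\item Continuity at $s\in\{0,1\}$ follows from the uniform convergence and the continuity of $h$. Neither $w(\cdot,\cdot,0)$ nor any lift $x\mapsto u_x$ is ever used. Base points are restored by passing to $H(s,*)^{-1}H(s,x)$, or by Lemma \ref{nb}.
\end{itemize}

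The paper instead applies Dadarlat's theorem only over $S^i$, to $\gamma$ and $l$. Its path $\operatorname{Ad}u_t\circ\gamma$ starts at $\operatorname{Ad}u_0\circ\gamma$ rather than at $\gamma$, which leaves a residual $\operatorname{Ad}u_0$. Killing it requires the surjectivity of $K_1(C(S^i)\otimes\mathcal{O}_\infty)\to K_1(C(S^i)\otimes\mathcal{O}_n\otimes\mathcal{O}_\infty)$, Theorem \ref{hg}, and the asymptotically central $J_t$ of Lemma \ref{inj}.

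Because you let $t\to 1$ at both ends of the homotopy, no residual unitary appears. You therefore need neither $\mathcal{O}_\infty$-absorption, nor $J_t$, nor the $K$-theory of $\mathcal{O}_n$, and your argument works for any unital Kirchberg algebra covered by Dadarlat's theorem. In exchange, the paper's route shows directly that $[S^i,U(\mathcal{O}_n)]\to[S^i,\operatorname{Inn}(\mathcal{O}_n)]$ is zero.
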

\begin{proof}
We identify $\mathcal{O}_n$ with $\mathcal{O}_n\otimes \mathcal{O}_\infty$
and  show that $\pi_i(\operatorname{Inn}(\mathcal{O}_n\otimes\mathcal{O}_\infty))\to\pi_i(\operatorname{Aut}(\mathcal{O}_n\otimes\mathcal{O}_\infty))$ is injective.

    By Theorem \ref{hg} (i.e., $\pi_0(U(\mathcal{O}_n))=K_1(\mathcal{O}_n)=0$) and \cite[Thm. 7.4.]{Dc},
    two groups $\operatorname{Inn}(\mathcal{O}_n), \operatorname{Aut}(\mathcal{O}_n)$ are path connected.
    By Lemma \ref{nb},
    it is enough to show the injectivity of
    \[[S^i, \operatorname{Inn}(\mathcal{O}_n\otimes\mathcal{O}_\infty)]\to[S^i, \operatorname{Aut}(\mathcal{O}_n\otimes\mathcal{O}_\infty)].\]
    
    Fix $\gamma : S^i\ni x\mapsto \gamma_x\in \operatorname{Inn}(\mathcal{O}_n\otimes\mathcal{O}_\infty)$ and write $l : S^i\ni x\mapsto{\rm id}\in\operatorname{Aut}(\mathcal{O}_n\otimes\mathcal{O}_\infty)$.
    Note that Lemma \ref{nb} implies that the group $[S^i, G](\cong \pi_i(G))$ is commutative and the unit is the equivalence class of $l : S^i\ni x\mapsto {\rm id}\in G$.
    
As in \cite[Sec. 4]{Dh},
we identify $\gamma, l$ with the unital $*$-homomorphisms
\[\gamma : \mathcal{O}_n\otimes\mathcal{O}_\infty\ni d\mapsto (x\mapsto \gamma_x(d))\in C(S^i)\otimes\mathcal{O}_n\otimes\mathcal{O}_\infty,\]
  \[l : \mathcal{O}_n\otimes\mathcal{O}_\infty\ni d\mapsto 1_{C(S^i)}\otimes d\in C(S^i)\otimes\mathcal{O}_n\otimes\mathcal{O}_\infty.\]  
    Assume that $[\gamma]=[l](=0)\in [S^i, \operatorname{Aut}(\mathcal{O}_n\otimes\mathcal{O}_\infty)]$.
    Then, one has \[KK(\gamma)=KK(l)\in KK(\mathcal{O}_n\otimes\mathcal{O}_\infty, C(S^i)\otimes\mathcal{O}_n\otimes\mathcal{O}_\infty),\]
    \[\gamma (1)=l(1)=1_{C(S^i)}\otimes 1,\]
    and \cite[Thm. 2.9, Lem. 3.1]{Dh} gives a continuous path
    \[ [0, 1)\ni t\mapsto u_t\in U(C(S^i)\otimes\mathcal{O}_n\otimes\mathcal{O}_\infty)\]
    satisfying
    \[||\operatorname{Ad}u_t\circ\gamma (d)-l(d)||\to 0,\quad (t\to 1), \;\; d\in\mathcal{O}_n\otimes\mathcal{O}_\infty.\]

    Thus, we have
    \[[\operatorname{Ad}u_0]+[\gamma]=[\operatorname{Ad}u_0\circ\gamma]=[l]=0\in [S^i, \operatorname{Inn}(\mathcal{O}_n\otimes\mathcal{O}_\infty)],\]
    where the map $\operatorname{Ad}u_0$ is given by
    \[\operatorname{Ad}u_0 : S^i\ni x\mapsto \operatorname{Ad}u_0(x)\in \operatorname{Inn}(\mathcal{O}_n\otimes\mathcal{O}_\infty),\]
    \[u_0 : S^i\ni x\mapsto u_0(x)\in U(\mathcal{O}_n\otimes\mathcal{O}_\infty),\quad u_0\in U(C(S^i)\otimes\mathcal{O}_n\otimes\mathcal{O}_\infty).\]

    Since the map $\theta : \mathcal{O}_\infty\ni b\mapsto 1\otimes b\in \mathcal{O}_n\otimes\mathcal{O}_\infty$ induces the surjection
    \[K_1(C(S^i)\otimes\mathcal{O}_\infty)\to K_1(C(S^i)\otimes\mathcal{O}_n\otimes\mathcal{O}_\infty),\quad (i\geq 0),\]
    there exists an unitary $v\in C(S^i)\otimes\mathcal{O}_\infty$ such that two unitaries \[({\rm id}_{C(S^i)}\otimes \theta)(v),\; u_0\in U(C(S^i)\otimes\mathcal{O}_n\otimes\mathcal{O}_\infty)\] are homotopic by Theorem \ref{hg}.
    Thus, one has
    \[[\operatorname{Ad}u_0]=[\operatorname{Ad}({\rm id}_{C(S^i)}\otimes\theta)( v)]\in [S^i, \operatorname{Inn}(\mathcal{O}_n\otimes \mathcal{O}_\infty)].\]
    Using the unital endomorphism $J_t$ in Lemma \ref{inj},
    consider the path
    \[V_t:=({\rm id}_{C(S^i)}\otimes(\theta\circ J_t))(v)\in U(C(S^i)\otimes \mathcal{O}_n\otimes\mathcal{O}_\infty),\; t\in [0, 1),\; V_0=({\rm id}_{C(S^i)}\otimes\theta)(v).\]
    By Lemma \ref{inj},
    one has 
    \begin{align*}
    &||({\rm id}_{C(S^i)}\otimes(\theta\circ J_t))(f\otimes b)(h\otimes d\otimes a)-(h\otimes d\otimes a)({\rm id}_{C(S^i)}\otimes(\theta\circ J_t))(f\otimes b)||\\
    =&||fh\otimes d\otimes (J_t(b)a-aJ_t(b))||\\
    \to&0, \;\; (t\to 1)
    \end{align*}
    for any $f, h\in C(S^i),\; d\in\mathcal{O}_n,\; a, b\in\mathcal{O}_\infty$,
    and this implies
\begin{align*}
    ||\operatorname{Ad}V_t(h\otimes d\otimes a)-h\otimes d\otimes a||\to 0,\;\; (t\to 1).
\end{align*}
    
So, two maps
    $\operatorname{Ad} ({\rm id}_{C(S^i)}\otimes\theta)(v)=\operatorname{Ad}V_0$ and $l$ are connected by the path 
    \[\operatorname{Ad}V_t : \mathcal{O}_n\otimes\mathcal{O}_\infty\ni d\mapsto V_t(1_{C(S^i)}\otimes d)V_t^*\in C(S^i)\otimes\mathcal{O}_n\otimes\mathcal{O}_\infty, \;t\in [0, 1).\]
    Finally, we have \[[\operatorname{Ad}u_0]=[\operatorname{Ad}({\rm id}_{C(S^i)}\otimes\theta)(v)]=[l]=0\in [S^i, \operatorname{Inn}(\mathcal{O}_n\otimes\mathcal{O}_\infty)]\]
    and $[\gamma]=0\in [S^i, \operatorname{Inn}(\mathcal{O}_n\otimes\mathcal{O}_\infty)]$.
    This completes the proof.
\end{proof}
\begin{rem}\label{ic}
    The above argument using $J_t$ and \cite[Thm. 2.3]{DP} implies the weak contractibility of $\operatorname{Inn}(\mathcal{O}_\infty)$.
\end{rem}

By Lemma \ref{fh} and eq. (\ref{dada}), it is enough to show the surjectivity of the map
\[\pi_{2k+1}(\operatorname{Inn}(\mathcal{O}_n))\to \pi_{2k+1}( \operatorname{Aut}(\mathcal{O}_n))=\mathbb{Z}/(n-1)\mathbb{Z},\quad k\geq 0\]
for  completing the proof of Theorem \ref{M}.
\begin{proof}[{Proof of Theorem \ref{M}}]
Let 
\[C_\nu \mathcal{O}_n:=\{f\in C[0, 1]\otimes\mathcal{O}_n\mid f(0)\in \mathbb{C}1_{\mathcal{O}_n},\;\; f(1)=0\}\]
be the mapping cone of the map $\nu : \mathbb{C}1_{\mathcal{O}_n}\subset\mathcal{O}_n$ with the following exact sequence (see \cite[Sec. 19.4]{B}):
\[S\mathcal{O}_n\xrightarrow{i}C_\nu\mathcal{O}_n\xrightarrow{ev_0}\mathbb{C}.\]
Then, \cite[Thm. 19. 5. 7]{B} yields the following exact sequence:
  \[\xymatrix{
    &\mathbb{Z}/(n-1)\mathbb{Z}\ar@{=}[d]&\mathbb{Z}/(n-1)\mathbb{Z}\ar@{=}[d]\\
    KK(\mathbb{C}, SS^{2k+1}\mathcal{O}_n)\ar[r]^{ev_0^*}&KK(C_\nu\mathcal{O}_n, SS^{2k+1}\mathcal{O}_n)\ar[r]^{i^*}&KK(S\mathcal{O}_n, SS^{2k+1}\mathcal{O}_n)\ar[d]\\
    KK^1(S\mathcal{O}_n, SS^{2k+1}\mathcal{O}_n)\ar[u]&KK^1(C_\nu\mathcal{O}_n, SS^{2k+1}\mathcal{O}_n)\ar[l]^{i^*}&KK^1(\mathbb{C}, SS^{2k+1}\mathcal{O}_n),\ar[l]^{ev_0^*}\\
    &&0\ar@{=}[u]
    }\]
    and the map $i^* : KK(C_\nu\mathcal{O}_n, SS^{2k+1}\mathcal{O}_n)\to KK(S\mathcal{O}_n, SS^{2k+1}\mathcal{O}_n)$ is an isomorphism.
    
    Applying \cite[Thm. 5.9]{Dh},
    one has the commutative diagram
    \[\xymatrix{
    \pi_{2k+1}(\operatorname{Aut}(\mathcal{O}_n))\ar[d]^{\cong}\ar[r]^{-\otimes{\rm id}_\mathbb{K}}& \pi_{2k+1}(\operatorname{Aut}(\mathcal{O}_n\otimes\mathbb{K}))\ar[d]^{\cong}\\
    KK(C_\nu \mathcal{O}_n, SS^{2k+1}\mathcal{O}_n)\ar[r]^{i^*}&KK(S\mathcal{O}_n, SS^{2k+1}\mathcal{O}_n) 
    }\]
    where the top horizontal map is induced by
    \[\operatorname{Aut}(\mathcal{O}_n)\ni\alpha\mapsto \alpha\otimes{\rm id}_{\mathbb{K}}\in\operatorname{Aut}(\mathcal{O}_n\otimes\mathbb{K}).\]
    

    
    Now we obtain an isomorphism
    \[-\otimes{\rm id}_{\mathbb{K}} : \pi_{2k+1}(\operatorname{Aut}(\mathcal{O}_n))\cong \pi_{2k+1}(\operatorname{Aut}(\mathcal{O}_n\otimes\mathbb{K}))\cong\pi_{2k}(\Omega\operatorname{Aut}(\mathcal{O}_n\otimes\mathbb{K})),\]
and it is enough to show that the horizontal maps in the commutative diagram below are surjective:
    \[\xymatrix{
    \pi_{2k+1}(\operatorname{Inn}(\mathcal{O}_n))\ar[r]^{-\otimes{\rm id}_{\mathbb{K}}}&\pi_{2k+1}(\operatorname{Aut}(\mathcal{O}_n\otimes\mathbb{K}))\\
    \pi_{2k}(\Omega\operatorname{Inn}(\mathcal{O}_n))\ar[u]^{\cong}\ar[r]^{-\otimes{\rm id}_{\mathbb{K}}}&\pi_{2k}(\Omega\operatorname{Aut}(\mathcal{O}_n\otimes\mathbb{K})).\ar[u]^{\cong}
    }\]
    
    By Theorem \ref{loop},
    every element in $\pi_{2k}(\Omega\operatorname{Aut}(\mathcal{O}_n\otimes\mathbb{K}))$ is given by $[\rho^u]$ with the unitaries $u(t, x)\in U(\mathcal{O}_n),\; v(t, x)\in U(\mathcal{M}(\mathcal{O}_n\otimes\mathbb{K}))$ in eq. (\ref{central}), (\ref{pa2}).
    Since $[u(1/2, \cdot)]\in \pi_{2k}(U(\mathcal{O}_n))=0$ by Theorem \ref{hg},
    one can choose $v(t, x)$ to be
    \[v(t, x):=\tilde{v}(t, x)\otimes 1_{\mathcal{M}(\mathbb{K})}\]
    where $\tilde{v}\in U(C[0, 1/2]\otimes C(S^{2k})\otimes\mathcal{O}_n)$ is a path of unitaries satisfying
    \[\tilde{v}(0, x)= \tilde{v}(t, *)=1_{\mathcal{O}_n},\quad\tilde{v}(1/2, x)=u(1/2, x)\in U(\mathcal{O}_n).\]
    Now the unitaries $u(t, x), \tilde{v}(t, x)\in U(\mathcal{O}_n)$ gives an element 
    \[S^i\ni x\mapsto (t\mapsto \operatorname{Ad}w(t, x))\in\Omega\operatorname{Inn}(\mathcal{O}_n)\]
    of $\pi_{2k}(\Omega\operatorname{Inn}(\mathcal{O}_n))$ defined by
    \[w(t, x):=\tilde{v}(t, x),\;\; t\in [0, 1/2],\quad w(t, x):=u(t, x),\;\; t\in [1/2, 1).\]
    Since this element is sent to $[\rho^u]$ by the map
    \[-\otimes{\rm id}_\mathbb{K} : \pi_{2k}(\Omega\operatorname{Inn}(\mathcal{O}_n))\to \pi_{2k}(\Omega\operatorname{Aut}(\mathcal{O}_n)),\]
    we have proven the surjectivity.
\end{proof}

\section*{Appendix}
We briefly recall some homotopy theory used in this paper.
\subsection{The homotopy groups}
A topological space $X$ is called compactly generated if $X$ is Hausdorff and a subset $A\subset X$ is closed if and only if $A\cap C$ is closed for any compact subset $C\subset X$.
The metric space and CW complex, for example $S^i$, are compactly generated.
In the rest of this section,
we consider a metric space $(X, d_X)$ with a base point $x_0\in X$.

We write
\[S^1\wedge S^i:=\frac{S^1\times S^i}{(S^1\times \{*\}\; \cup \{*\}\times S^i)},\]
\[(t, x)\in S^1\wedge S^i,\quad t\in S^1:=\frac{[0, 1]}{\{0, 1\}},\quad x\in S^i.\]
Note that one has $S^1\wedge S^i\cong S^{i+1}$.
Let denote by
\[\operatorname{Map}_0(S^i, X):=\{\gamma : S^i\to X\mid \gamma (*)=x_0\}\]
the set of continuous base point preserving maps,
and one has to introduce a topology in $\operatorname{Map}_0(S^i, X)$.
\begin{lem}\label{topo}
For a metric space $X$,
  the compact--open topology of $\operatorname{Map}_0(S^i, X)$ coincides with a metric topology defined by
  \[d(\gamma_1, \gamma_2):=\max\{d_X(\gamma_1(x), \gamma_2(x))\mid x\in S^i\},\quad \gamma_1, \gamma_2\in\operatorname{Map}_0(S^i, X).\]
\end{lem}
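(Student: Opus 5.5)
We will show the two topologies coincide by proving two inclusions: the compact--open topology is coarser than the sup-metric topology, and conversely. The compact--open topology on $\operatorname{Map}_0(S^i, X)$ has subbasic open sets
\[W(K, U):=\{\gamma\mid \gamma(K)\subset U\},\]
where $K\subset S^i$ is compact and $U\subset X$ is open. Throughout we use that $S^i$ is a compact metric space.

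\textbf{Step 1: each $W(K,U)$ is open in the metric topology.} Fix $\gamma\in W(K, U)$. Then $\gamma(K)$ is a compact subset of the open set $U$, so
\[\delta:=\inf\{d_X(y, z)\mid y\in\gamma(K),\ z\in X\setminus U\}>0.\]
(If $U=X$ there is nothing to prove.) Positivity of $\delta$ holds because the function $y\mapsto d_X(y, X\setminus U)$ is continuous and strictly positive on the compact set $\gamma(K)$. Now if $d(\gamma,\gamma')<\delta$, then every point $\gamma'(x)$ with $x\in K$ lies within distance $\delta$ of $\gamma(x)\in\gamma(K)$, hence lies in $U$. So the metric ball of radius $\delta$ about $\gamma$ is contained in $W(K,U)$.

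\textbf{Step 2: each metric ball contains a compact--open neighbourhood.} Fix $\gamma$ and $\epsilon>0$. By uniform continuity of $\gamma$ on the compact space $S^i$, cover $S^i$ by finitely many closed sets $K_1,\dots,K_m$ with $\operatorname{diam}\gamma(K_j)<\epsilon/3$; for instance, take closed balls of small radius. Let $U_j$ be the open $\epsilon/3$-neighbourhood of $\gamma(K_j)$. Then $\gamma\in\bigcap_j W(K_j, U_j)$.

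Suppose $\gamma'$ also lies in this intersection, and let $x\in K_j$. Since $\gamma'(x)\in U_j$, there is some $y\in K_j$ with $d_X(\gamma'(x),\gamma(y))<\epsilon/3$. Together with $d_X(\gamma(y),\gamma(x))<\epsilon/3$, the triangle inequality gives
\[d_X(\gamma'(x),\gamma(x))<2\epsilon/3.\]
Since the $K_j$ cover $S^i$, this bound holds at every point, and by compactness the maximum over $S^i$ is at most $2\epsilon/3<\epsilon$. Hence the intersection lies in the $\epsilon$-ball about $\gamma$.

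The base point condition plays no role in either step: $\operatorname{Map}_0(S^i,X)$ carries the subspace topology in both cases, so the restriction is harmless.

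The only genuinely delicate points are the positivity of $\delta$ in Step 1 and the finite cover in Step 2; both are standard consequences of the compactness of $S^i$. Neither is expected to be a serious obstacle.
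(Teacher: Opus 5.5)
Your proof is correct and complete. Step 1 shows that each subbasic set $W(K,U)$ is open for the sup metric. Step 2 shows that every metric ball about $\gamma$ contains a finite intersection $\bigcap_j W(K_j,U_j)$ that still contains $\gamma$. Together these give equality of the two topologies.

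The paper does not prove this lemma; it states it in the Appendix as a standard point-set fact. Your argument is the standard one: the compact-domain case of the coincidence between the compact--open topology and the topology of uniform convergence into a metric space. It therefore supplies exactly the justification the paper leaves implicit.
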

By the above lemma,
the space $\operatorname{Map}_0(S^i, X)$ with the compact--open topology is metrizable and compactly generated,
and one can define the homotopy group by
\[\pi_i(X):=\operatorname{Map}_0(S^i, X)/\sim_h\]
(see \cite[Sec. 6.1.2]{DK} for the general definition of topology in $\operatorname{Map}_0(-, -)$).

We write $\Omega X:=\operatorname{Map}_0(S^1, X)$ and this is a metric space with a metric as in Lemma \ref{topo}.
Furthermore, the following sets
\[\operatorname{Map}_0(S^1\wedge S^i, X),\quad \operatorname{Map}_0(S^i, \Omega X)\]
are metrizable in a similar manner, and the following map preserves the metrics (i.e., homeomorphism)
\[\operatorname{Map}_0(S^1\wedge S^i, X)\ni f(t, x)\mapsto (x\mapsto (t\mapsto f(t, x)))\in\operatorname{Map}_0(S^i, \Omega X).\]

Thus, one has
\[\pi_{i+1}(X)\cong \pi_i(\Omega X).\]

\subsection{Proof of Proposition \ref{se}}

For $u\in U(A)$,
we denote by \[\sigma (u):=\{z\in\mathbb{C}\mid z1_A-u\in A\;\text{is not invertible}\}\]
the spectrum of $u$.
We write $B_r(u):=\{w\in U(A)\mid ||w-u||<r\}$.
   We denote by
   \[q : U(A)\to PU(A)\]
   the quotient map.
   Note that $PU(A)$ is a topological group with respect to the quotient topology and $q$ is an open map.
   Let $\epsilon>0$ be a small positive real number such that the spectrum of arbitrary $w\in B_{2\epsilon}(1_A)$ is contained in
   \[\{e^{it}\in\mathbb{T}\mid t\in [-\pi/8, \pi/8]\}.\]

   We have an open set
   \[O:=q(B_\epsilon(1_A))\subset PU(A)\]
   and we will construct a continuous section
   \[s : O\to U(A).\]
For any $x\in O$,
there is a lift $v\in B_\epsilon(1_A)$ satisfying $x=q(v)$.
Since the spectrum $\sigma(v)\subset \{e^{it}\mid t\in [-\pi/8,\pi/8]\}$ is compact,
one has 
\[t_v:=\max\{t\in [-\pi/8, \pi/8]\mid e^{it}\in\sigma (v)\}\]
and we define $s$ by
\[s : O\ni x\mapsto e^{-it_v}v\in U(A),\quad v\in B_\epsilon(1_A),\quad q(v)=x.\]
   \begin{lem}\label{wel}
       The map $s$ is well-defined.
   \end{lem}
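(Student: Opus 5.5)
The claim has two parts: the value $e^{-it_v}v$ does not depend on which lift $v\in B_\epsilon(1_A)$ of $x$ we pick, and $t_v$ itself is well defined. The second part is immediate: $\sigma(v)$ is a nonempty compact subset of $\{e^{it}\mid t\in[-\pi/8,\pi/8]\}$, so $t_v$ is a maximum over a nonempty compact set. For the first part, take two lifts $v,w\in B_\epsilon(1_A)$ with $q(v)=q(w)=x$. Then $w=zv$ for some $z\in\mathbb{T}$, and the plan is to show $e^{-it_w}w=e^{-it_v}v$, i.e.\ $z=e^{i(t_w-t_v)}$.

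The key step is to pin down $z$ near $1$. From $\|v-1_A\|<\epsilon$ and $\|zv-1_A\|<\epsilon$ we get
\[\|z1_A-1_A\|\leq \|z1_A-zv\|+\|zv-1_A\|=\|1_A-v\|+\|w-1_A\|<2\epsilon,\]
so $z1_A\in B_{2\epsilon}(1_A)$. By the choice of $\epsilon$, $\sigma(z1_A)=\{z\}$ lies in $\{e^{it}\mid t\in[-\pi/8,\pi/8]\}$. Write $z=e^{is}$ with $|s|\leq\pi/8$.

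Next compare the spectra. We have $\sigma(w)=\sigma(zv)=z\sigma(v)=\{e^{i(t+s)}\mid e^{it}\in\sigma(v)\}$. Both $\sigma(v)$ and $\sigma(w)$ lie in the arc $t\in[-\pi/8,\pi/8]$, and $|s|\leq\pi/8$, so every angle $t+s$ lies in $[-\pi/4,\pi/4]$. On this range $t\mapsto e^{it}$ is injective, and within that small window there is no wraparound. Hence the angle parametrization of $\sigma(w)$ inside $[-\pi/8,\pi/8]$ is exactly $\{t+s\mid e^{it}\in\sigma(v)\}$. Taking maxima gives $t_w=t_v+s$, so
\[e^{-it_w}w=e^{-it_v-is}e^{is}v=e^{-it_v}v.\]

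The only delicate point is this angle-matching step. An angle of $\sigma(w)$ taken in $[-\pi/8,\pi/8]$ must coincide with $t+s$ rather than with $t+s\pm2\pi$. This holds because $|t+s|\leq\pi/4<\pi$, so the representative in $(-\pi,\pi]$ is unique. Everything else in the argument is routine.
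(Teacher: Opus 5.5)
Your proof is correct and follows the paper's argument. You write the two lifts as differing by a scalar $z=e^{is}$ with $|s|\le\pi/8$, using $z1_A\in B_{2\epsilon}(1_A)$, rotate the spectrum, and use that all angles stay in $[-\pi/4,\pi/4]$ to conclude $t_w=t_v+s$. The differences are cosmetic: the paper reduces by symmetry to $s\in[0,\pi/8]$, whereas you handle both signs at once and spell out the angle-uniqueness step that the paper leaves implicit.
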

   \begin{proof}
       Let $v_1, v_2\in B_\epsilon(1_A)$ be two lifts of $x$ (i.e., $q(v_1)=q(v_2)=x$).
       Since $v_1v_2^*\in B_{2\epsilon}(1_A)\cap \mathbb{T}$,
       we may assume that there exists $t_0\in [0,\pi/8]$ satisfying
       \[v_1=e^{it_0}v_2.\]
       Thus, one has \[\sigma (v_1)=e^{it_0}\sigma(v_2)\subset\{e^{it}\mid t\in [-\pi/4, \pi/4]\},\]
       and this implies $t_{v_1}=t_0+t_{v_2}$.
       So one has
       \begin{align*}
           e^{-it_{v_1}}v_1=e^{-it_{v_2}}e^{-it_0}e^{it_0}v_2=e^{-it_{v_2}}v_2
       \end{align*}
       and the section $s$ is well-defined.
   \end{proof}
   \begin{lem}\label{cont}
       The section $s : O\to U(A)$ is continuous.
   \end{lem}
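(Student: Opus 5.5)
Since $q$ is open and surjective with the quotient topology, it suffices to show that $s\circ q : B_\epsilon(1_A)\to U(A)$ is continuous at each point. Here $s(q(v))=e^{-it_v}v$, and $v\mapsto v$ is continuous. So the task reduces to showing that the function $B_\epsilon(1_A)\ni v\mapsto t_v\in[-\pi/8,\pi/8]$ is continuous. Indeed, if $x_k\to x$ in $O$, openness of $q$ and the metric on $PU(A)$ let us pick lifts $v_k\to v$ in $B_\epsilon(1_A)$, and then $s(x_k)=e^{-it_{v_k}}v_k\to e^{-it_v}v=s(x)$.

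\textbf{Semicontinuity properties of the spectrum.} To prove continuity of $v\mapsto t_v$, I would use the standard properties of the spectrum under norm perturbation. The spectrum is upper semicontinuous: for every open $W\supset\sigma(v)$ we have $\sigma(w)\subset W$ for $w$ close to $v$. This gives $\limsup_{w\to v} t_w\le t_v$.

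\textbf{The opposite inequality.} Lower semicontinuity fails for general operators, but for normal elements (unitaries) the spectrum is continuous in the Hausdorff metric. Concretely, for unitaries $u,w$,
\[\operatorname{dist}(\lambda,\sigma(w))\le \|u-w\|\quad\text{for all }\lambda\in\sigma(u).\]
This holds because if $\operatorname{dist}(\lambda,\sigma(w))>\|u-w\|$, then $\|(\lambda-w)^{-1}\|=\operatorname{dist}(\lambda,\sigma(w))^{-1}$ by normality, and a Neumann series shows $\lambda-u=(\lambda-w)-(u-w)$ is invertible. Applying this with $\lambda=e^{it_v}\in\sigma(v)$ yields a point $e^{is}\in\sigma(w)$ within $\|v-w\|$ of $e^{it_v}$. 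Since $\sigma(w)\subset\{e^{it}\mid |t|\le\pi/8\}$, this point corresponds to some $s\in[-\pi/8,\pi/8]$ with $|s-t_v|\le C\|v-w\|$, where $C$ comes from comparing arc length with chord length on a small arc. Hence $t_w\ge t_v-C\|v-w\|$. Symmetrically, $t_v\ge t_w-C\|v-w\|$, so $t$ is in fact Lipschitz on $B_\epsilon(1_A)$, and the upper semicontinuity argument is no longer needed.

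\textbf{Main obstacle.} The main point is this Hausdorff-distance estimate. It uses normality essentially, together with the choice of $\epsilon$, which keeps all spectra in a small arc so that the angle parametrization is a homeomorphism there. The remaining work is routine bookkeeping about lifting convergent sequences through the open map $q$. Using the metric on $PU(A)$ from Section 2, $d(q(v_k),q(v))\to0$ gives $z_k\in\mathbb{T}$ with $z_kv_k\to v$. We may replace $v_k$ by $z_kv_k\in B_\epsilon(1_A)$ for large $k$, which is legitimate by Lemma \ref{wel}.
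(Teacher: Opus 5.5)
Your proof is correct, but its key step differs from the paper's.

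The paper works directly with a basic open set $B_\delta(u)$. Around a lift $v$ of $x$ it writes $t_w=\sup\sigma(\text{Arg}(w))$. It argues that $w\mapsto\text{Arg}(w)$ is norm continuous near $v$, because $\text{Arg}$ is uniformly approximable by trigonometric polynomials on the arc. It then gets $|t_v-t_w|<\delta_1$ from ``continuity of functional calculus''; this tacitly also uses that $a\mapsto\max\sigma(a)$ is continuous on self-adjoint elements. Finally it checks by hand that the open set $q(B_{\delta_3}(v))$ lies inside $s^{-1}(B_\delta(u))$.

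You instead reduce at once to continuity of $s\circ q$ on $B_\epsilon(1_A)$. This is legitimate, since $q|_{B_\epsilon(1_A)}$ is a continuous open surjection onto $O$ and hence a quotient map; your sequential argument via the metric on $PU(A)$ also works. You then replace the functional-calculus step by the elementary perturbation bound $\operatorname{dist}(\lambda,\sigma(w))\le\|v-w\|$ for $\lambda\in\sigma(v)$ and normal $w$. This yields the explicit Lipschitz estimate $|t_v-t_w|\le C\|v-w\|$ with $C=(\pi/8)/\sin(\pi/8)$, valid for all $v,w\in B_{2\epsilon}(1_A)$.

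Your route is more self-contained and quantitative, and it makes the $\epsilon$--$\delta$ bookkeeping unnecessary. The paper's route stays inside standard continuous functional calculus but leaves the continuity of the top of the spectrum implicit. As you note yourself, the upper semicontinuity paragraph is superfluous.
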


   \begin{proof}
       Fix an open set $B_\delta(u),\;\delta>0,\; u\in U(A)$ and $x\in s^{-1}(B_\delta (u))\subset O$. 
There exists $v\in B_\epsilon(1_A)$ satisfying
\[s(x)=e^{-it_v}v,\quad ||s(x)-u||<\delta.\]
If $w\in B_\epsilon(v)$,
one has \[w\in B_{2\epsilon}(1_A),\;\; \sigma (w)\subset \{e^{it}\mid t\in[-\pi/8, \pi/8]\}.\]
For the function \[\text{Arg} : \{e^{it}\mid t\in [-\pi/8, \pi/8]\} \ni e^{it}\mapsto t\in (-\pi, \pi],\]
 functional calculus gives
\[t_w=\sup\{ t\in \sigma (\text{Arg}(w))\},\quad w\in B_\epsilon (v).\]
Note that 
\[B_\epsilon(v)\ni w\mapsto \text{Arg}(w)\in A\]
is continuous because evrery spectrum $\sigma (w)$ is contained in the domain $\{e^{it}\mid t\in [-\pi/8, \pi/8]\}$ and $\text{Arg}$ is uniformly approximated on the domain via the trigonometric polynomials.

For a positive real number $\delta_1:=\frac{\delta-||s(x)-u||}{4}$,
there exists $\epsilon>\delta_2>0$ such that every $w\in B_{\delta_2}(v)$ satisfies
\[|t_v-t_w|<\delta_1\]
due to the continuity of functional calculus.
Let $\delta_3:=\min\{\delta_1, \delta_2, \frac{\epsilon-||v-1_A||}{4}\}>0$.
We show that 
\[x\in q(B_{\delta_3}(v))\subset s^{-1}(B_\delta(u)).\]
For any $w\in B_{\delta_3}(v)$,
one has
\begin{align*}
    ||w-1_A||=&||w-v+v-1_A||\\
    <&\delta_3+||v-1_A||\\
    \leq&\frac{\epsilon-||v-1_A||}{4}+||v-1_A||\\
    <&\epsilon
\end{align*}
and $x=q(v)\in q(B_{\delta_3}(v))\subset O$.
For any $q(w)\in q(B_{\delta_3}(v))$,
one has
\begin{align*}
    ||s(q(w))-u||=&||e^{-it_w}w-u||\\
    =&||e^{-it_w}w-e^{-it_v}v+s(x)-u||\\
    \leq&||e^{-it_w}w-e^{-it_v}v||+||s(x)-u||\\
    \leq&||e^{-it_w}w-e^{-it_v}w||+||e^{-it_v}w-e^{-it_v}v||+||s(x)-u||\\
    \leq&|t_w-t_v|+||w-v||+||s(x)-u||\\
    \leq&\delta_1+\delta_3+||s(x)-u||\\
    \leq&\frac{\delta-||s(x)-u||}{2}+||s(x)-u||\\
    <&\delta
\end{align*}
and this implies 
\[q(B_{\delta_3}(v))\subset s^{-1}(B_\delta(u)).\]
Applying above argument for arbitrary $x\in s^{-1}(B_\delta(u))$ shows $s^{-1}(B_\delta(u))\subset O$ is open,
and we have shown that $s : O\to U(A)$ is continuous.
   \end{proof}

\begin{proof}[{Proof of Proposition \ref{se}}]
Since $PU(A)$ is a topological group,
any element $y=q(w)\in PU(A)$ has an open neighborhood $Oy:=\{xy\in PU(A)\mid x\in O\}$,
and Lemma \ref{wel} and Lemma \ref{cont} give a continuous section $S_y : Oy\ni xy\mapsto s(x)w\in U(A)$.
Then, one has a local trivialization
\[q^{-1}(Oy)\ni u\mapsto (q(u), S_y(q(u))^*u)\in Oy\times\mathbb{T},\]
\[Oy\times \mathbb{T}\ni (x, z)\mapsto S_y(x)z\in q^{-1}(Oy)\]
which are conpatible with $\mathbb{T}$-action.
Thus, the map $q : U(A)\to PU(A)$ is a locally trivial principal $\mathbb{T}$-bundle.

Let $PU(A)_0, U(A)_0$ be the path connected components of the identities.
By definition, one has \[\pi_i(PU(A))=\pi_i(PU(A)_0),\quad \pi_i(U(A))=\pi_i(U(A)_0),\quad i\geq 1,\]
and \cite[Thm. 4.41, Prop. 4.48]{Hat} yield a long exact sequence
\[\cdots\to\pi_{i+1}(PU(A))\to\pi_i(\mathbb{T})\to \pi_i(U(A))\to \pi_i(PU(A))\to\cdots \to\pi_1(PU(A))\to \pi_0(\mathbb{T})\to\pi_0(U(A)_0)\to 0. \]
Since $\pi_0(\mathbb{T})=\pi_0(U(A)_0)=0$,
this completes the proof.
\end{proof}
We note that the fiber bundle $U(A)\to PU(A)$ has the homotopy lifting property for $[0, 1]$ and this implies
\[U(A)_0=q^{-1}(PU(A)_0),\quad \pi_0(U(A))=\pi_0(PU(A)).\]

\end{document}